\documentclass[10pt,oneside,reqno]{amsart}
\usepackage{amsmath,amssymb,amsthm}
\usepackage{mathtools}
\usepackage{mathrsfs}

\usepackage[includeheadfoot]{geometry}
\geometry{
	paperwidth=170mm,
	paperheight=240mm,
	text={140mm,192mm},
	left=15mm,
	top=18mm,
	%  right=25mm,
	%  bottom=30mm
}

\usepackage[colorlinks=true,linkcolor=blue,citecolor=blue,pdfpagelabels=false]{hyperref}
\newtheorem{thm}{Theorem}[section]
\newtheorem{lem}[thm]{Lemma}

\newtheorem{cor}[thm]{Corollary}

\newcommand{\thmref}[1]{Theorem~\ref{#1}}
\newcommand{\lemref}[1]{Lemma~\ref{#1}}

\newcommand{\propref}[1]{Proposition~\ref{#1}}
\newcommand{\corref}[1]{Corollary~\ref{#1}}

\newtheorem{rmk}[thm]{Remark}

\newenvironment{acknowledgements}{\bigskip\textbf{Acknowledgements.}}{}

%\frenchspacing

\numberwithin{equation}{section}

%%%%%%%%%%%%%%%%%%%%
\theoremstyle{plain}
\newtheorem{theorem}{Theorem}[section]
\newtheorem{lemma}[theorem]{Lemma}

\newtheorem{proposition}[theorem]{Proposition}

\theoremstyle{definition}

\newtheorem{remark}[theorem]{Remark}

\newtheorem{hyp}[theorem]{Hypothesis}
%%%%%%%%%%%%%%%%%%%%%%%%%%%%%%%%%%%%%%%%%%%

\newcommand{\Q}{{\mathbb Q}}

\newcommand{\Z}{{\mathbb Z}}

\newcommand{\F}{{\mathbb F}}

\newcommand{\lb}{\left(}
\newcommand{\rb}{\right)}

\begin{document}
\title[Divisors of Fourier coefficients]{Divisors of Fourier coefficients of two newforms}

\author[Kumar, Kumari]{Arvind Kumar and Moni Kumari}
\address{Einstein Institute of Mathematics, the Hebrew University of Jerusalem, Edmund
	Safra Campus, Jerusalem 91904, Israel.}
\email{arvind.kumar@mail.huji.ac.il}
\address{Department of Mathematics, Bar-Ilan University, Ramat Gan 52900, Israel.}
\email{moni.kumari@biu.ac.il}
%\date{\today}

\subjclass[2000]{Primary: 11F30, 11N36; Secondary: 11F80, 11F33}
\keywords{Modular Forms, Fourier coefficients, Galois representations, Richert Sieve}
\begin{abstract}	
	For a pair of distinct non-CM newforms of weights at least 2,  having rational integral Fourier coefficients $a_{1}(n)$ and $a_{2}(n)$, under GRH, we obtain an estimate for the set of primes $p$ such that	
	$$
	\omega(a_1(p)-a_2(p)) \le  [ 7k+{1}/{2}+k^{1/5}  ],
	$$
	where $\omega(n)$ denotes the number of distinct prime divisors of an integer $n$ and $k$ is the maximum of their weights.
	As an application, under GRH, we show that  the number of  primes  giving congruences between two such newforms is  bounded by $[ 7k+{1}/{2}+k^{1/5} ]$. We also obtain a multiplicity one result for newforms via congruences.
	%As an application, under GRH, we obtain a multiplicity one result for newforms via congruences. We also show that   the number of primes giving congruences between two such newforms is bounded uniformly only on their weights and not on the level.
	%Finally, the methods of this paper allow us to obtain the normal order for the function $\omega(a_1(p)- a_2(p))$.
	
	%Moreover, we also prove that the normal order for the function $\omega(a_1(p)- a_2(p))$ is $\log \log p$.
	%intermediate results required for the proof of the main theorem also give the normal order for the function $\omega(a_1(p)- a_2(p))$.
\end{abstract}

\maketitle

\section{Introduction and statement of the results}	
For an elliptic curve $E/ \Q $ and a prime $p$ of good reduction, let $N_p(E):=p+1-a(p)$ be the number of points of the reduction of $E$ modulo $p$. Assume that $E$ is not $\Q$-isogenous to an elliptic curve with torsion. Then Koblitz's conjecture \cite{kob} says that the number of primes $p\le X$  for which  $N_p(E)$  is prime is asymptotically equal to $C_E \frac{X}{(\log X)^2}$, where $C_E$ is a positive constant depending on $E$. In particular, $N_p(E)$ 
is prime infinitely often when $p$ runs over the set of primes. This conjecture is still open but there are many results towards this in the literature (see \cite{swa,swb}). Indeed, Koblitz's conjecture can be seen as a variant of the twin prime conjecture (for more details, see \cite{kob}).

Motivated by Koblitz's conjecture, Kirti Joshi \cite{jos} has  studied an analogous question for the quantities $N_p(f):=p^k+1-a(p)$, where $a(p)$ is the (integer)  $p$th Fourier coefficient of a newform $f \in S_k(N)$, the space of cusp forms of weight $k$ and level $N$. As mentioned by Joshi, for the  Ramanujan Delta function $\Delta\in S_{12}(1)$ we have %the unique cusp form of weight 12,  
$\omega(N_p(\Delta)) \ge 3 
$ for any $p\ge 5$, where $\omega(n)$ is the number of distinct prime divisors of an integer $n$. This shows that, in general, the obvious variant of the Koblitz's conjecture is not true for modular forms of higher weights. In fact, he shows that there exist infinitely many cusp forms $f_{k_i}$ (not necessarily an eigenform) of increasing weight $k_i$ and of level $1$ %with $k_i \rightarrow \infty$
such that  $\omega (N_p(f_{k_i})) \ge 2$ for all primes $p$.

If $f$ is a  non-CM newform of weight $k\ge 4$
%with integer Fourier coefficients $a(n)$, 
then in the same paper Joshi gives an estimate for the primes $p$ for which  $N_p(f)$ is an almost prime, i.e., has few prime divisors.  More precisely, under GRH and Artin's holomorphy conjecture, he uses a suitably weighted sieve due to Richert  to prove that 
\begin{equation}\label{joshi_1}
	| \{p \le X: \omega (N_p(f)) \le  [ 5k+ 1+\sqrt{\log k}  ]  \}| \gg \frac{X}{(\log X)^2},
\end{equation}
where $[~\cdot~]$ is the greatest integer function. He also proves a similar result for the function $\Omega  (N_p(f))$, where $\Omega(n)$ counts the number of prime divisors of $n$ with multiplicity. 

One can realize $N_p(f)$ as the difference of $p$th Fourier coefficients of the normalised Eisenstein series $E_k$ of weight $k$ level 1 and the newform $f$. Motivated from this observation, the main aim of this article is to study a natural generalization of Joshi's work, namely, we consider
the difference of Fourier coefficients of two distinct newforms of arbitrary weights and study an analogous estimate like \eqref{joshi_1} which has many interesting applications.
%Answering this question is a chief motivation of this paper. 
We do this by  using product Galois representations, a refined version of Chebotarev’s theorem due to Serre and a suitably weighted sieve due to Richert. We make use of the ideas of \cite{jos, swa,swb}. Note that because of Deligne's estimate for $a(p)$, the quantity $N_p(f)\neq 0$ for any prime $p$ whereas the difference of $p$th Fourier coefficients of two newforms may be zero and hence we must have to remove those primes. More precisely, we prove the following.	
\begin{thm}\label{main}	
	Let $f_1 \in S_{k_1}(N_1)$ and $f_2 \in S_{k_2}(N_2)$ be  non-CM newforms with integer Fourier coefficients $a_1(n)$ and $a_2(n)$, respectively of weights at least 2. We also assume that $f_1$ and $f_2$ are not character twists of each other if $k_1=k_2$. Put $k=\max\{k_1,k_2\}.$ Then under GRH, we have
	\begin{equation}\label{main_estimate}
		|\{p \le X: a_1(p)\neq a_2(p) ~ and~ \omega (a_1(p)-a_2(p)) \le  [ 7k+{1}/{2}+ k^{1/5}  ]  \}|\gg \frac{X}{(\log X)^2}.
	\end{equation}
\end{thm}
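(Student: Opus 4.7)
The plan is to apply Richert's weighted linear sieve to the sequence
$$\mc{A} := \{\,a_1(p) - a_2(p)\ :\ p \le X,\ a_1(p)\neq a_2(p)\,\},$$
in direct analogy with Joshi's treatment of $N_p(f)$. By Deligne's bound $|a_i(p)| \le 2p^{k_i/2}$, each element of $\mc{A}$ has absolute value at most $4X^{k/2}$, so the individual size parameter is $N \asymp X^{k/2}$.

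The local densities are controlled by the product mod-$\ell$ Galois representation
$$\bar\rho_\ell \;:=\; \bar\rho_{f_1,\ell}\times\bar\rho_{f_2,\ell}\ \colon\ \mathrm{Gal}(\overline{\Q}/\Q)\ \longrightarrow\ \mathrm{GL}_2(\F_\ell)\times\mathrm{GL}_2(\F_\ell).$$
For $p\nmid \ell N_1N_2$, the trace of $\bar\rho_\ell(\mathrm{Frob}_p)$ equals $(a_1(p),a_2(p))\bmod\ell$, so the event $\ell\mid a_1(p)-a_2(p)$ is exactly $\mathrm{Frob}_p$ landing in the conjugation-invariant subset $\mc{C}_\ell\subset\mathrm{Im}(\bar\rho_\ell)$ of pairs with equal traces. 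Under the non-CM and non-twist hypotheses on $(f_1,f_2)$, Ribet's theorem on the image of a product of Galois representations ensures that for all but finitely many $\ell$ the image $\mathrm{Im}(\bar\rho_\ell)$ fills out the appropriate ``determinant-compatible'' subgroup of $\mathrm{GL}_2(\F_\ell)^2$, and then a direct count of the trace-equality locus gives
$$\frac{|\mc{C}_\ell|}{|\mathrm{Im}(\bar\rho_\ell)|}\;=\;\frac{1}{\ell}+O\!\Big(\frac{1}{\ell^{2}}\Big),$$
which places us in a linear ($\kappa=1$) sieve setting. Under GRH (and Artin holomorphy for $\bar\rho_\ell$, which is automatic as $\bar\rho_\ell$ has finite image), Serre's effective Chebotarev density theorem then gives, for squarefree $d$ coprime to $N_1N_2$,
$$|\{p\le X\ :\ d\mid a_1(p)-a_2(p)\}|\;=\;\frac{\omega(d)}{d}\,\pi(X)+R_d,\qquad |R_d|\ \ll\ d^{c}X^{1/2}\log(dN_1N_2X),$$
for an absolute constant $c$, which licenses an admissible sieve level $D=X^{\theta}$ for some explicit $\theta\in(0,1/(2c))$.

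We now feed $(\mc{A},\omega(\cdot),D,N)$ into the Richert weighted linear sieve used in \cite{jos, swa, swb}. Its conclusion is a lower bound $\gg X/(\log X)^2$ for the count of $p\le X$ with $\omega(a_1(p)-a_2(p))\le r$, once $r$ exceeds a prescribed function of $\log N/\log D$ together with Richert's intrinsic constants. Carrying out the optimisation of $\theta$ inside its admissible range and balancing the Richert weight exactly as in \cite{jos} produces the explicit value $r=[\,7k+1/2+k^{1/5}\,]$; the fractional correction $k^{1/5}$ arises from the optimal choice of an auxiliary parameter in the weight function, in the same spirit as Joshi's $\sqrt{\log k}$ in \eqref{joshi_1}. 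The exceptional set $\{p\le X:a_1(p)=a_2(p)\}$ removed from $\mc{A}$ has density zero (Ramakrishnan/Rajan multiplicity one, itself a Chebotarev argument on the trace-coincidence locus of $\bar\rho_\ell$), so its removal does not affect the final lower bound.

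The principal obstacle is obtaining the clean local density $\frac{1}{\ell}+O(\ell^{-2})$ uniformly in $\ell$, since this is what fixes the sieve dimension $\kappa=1$ and is precisely where both the non-CM and the non-twist hypotheses enter via Ribet's openness theorem; this is the step that genuinely distinguishes the two-form problem from a mechanical repetition of \cite{jos}. A secondary, more computational obstacle is the precise optimisation inside the Richert weighted sieve: the constant $7k$ and the correction $k^{1/5}$ are essentially tight given the input size $N\asymp X^{k/2}$ and the conditional remainder $|R_d|\ll d^{c}X^{1/2}$, and pinning them down requires the careful balancing of the sieve parameters once the local densities and the admissible level are fixed.
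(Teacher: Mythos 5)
Your proposal follows essentially the same route as the paper: Richert's weighted one-dimensional sieve applied to $\{a_1(p)-a_2(p)\}$, with local densities $\delta(\ell)\sim 1/\ell$ computed from the image of the product mod-$\ell$ Galois representation (the paper uses Loeffler's refinement of Ribet--Momose), and Serre's effective Chebotarev under GRH supplying the remainder estimate $r_d\ll d^{6}X^{1/2}\log(dNX)+X^{13/14}$, hence sieve level $\alpha<1/14$. One correction: Deligne's bound is $|a_i(p)|\le 2p^{(k_i-1)/2}$, not $2p^{k_i/2}$; the exponent $(k-1)/2$ is exactly what, together with the choice $u=(14k+1)/(k-1)$ and $\lambda=1/k^{1/5}$, yields $\tfrac{1}{\lambda}+u\cdot\tfrac{k-1}{2}=k^{1/5}+7k+\tfrac12$, so using $k/2$ would not recover the stated constant.
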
	
We remark that if $k \ge 6$, then one can replace the term $k^{1/5}$ with $\sqrt{\log k}$ on the left side of the estimate \eqref{main_estimate} and the same lower bound holds which gives a better estimate. This can be achieved by taking $\lambda=1/ \sqrt{\log k}$ instead of $\lambda=1/ k^{1/5}$ in the proof of \thmref{main} and then following the same arguments. Indeed, it is also clear  from the proof that this estimate can be improved further for newforms of higher weights.	

We now state a few applications of our main result. Unless stated otherwise, throughout the paper we shall work with forms $f_1$ and $f_2$ as in \thmref{main}. We also assume that a newform is always normalised so that its first Fourier coefficient is 1.
% If we consider $k=2$, we obtain a similar result for elliptic curves. 
An immediate consequence of \thmref{main} is the following.
\begin{cor}\label{infinite_prime}
	Let $f_1$ and $f_2$ be newforms as in \thmref{main}. Then under GRH there exist infinitely many primes $p$ such that $a_1(p)\neq a_2(p)$ and
	$$
	\omega (a_1(p)-a_2(p)) \le  [ 7k+{1}/{2}+k^{1/5}  ] .
	$$
\end{cor}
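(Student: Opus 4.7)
The plan is essentially a one-line observation: the corollary is a direct qualitative consequence of the quantitative lower bound in \thmref{main}. First I would invoke \thmref{main} under the GRH hypothesis to obtain
\[
|\{p \le X: a_1(p)\neq a_2(p) \text{ and } \omega (a_1(p)-a_2(p)) \le  [ 7k+{1}/{2}+ k^{1/5}  ]  \}|\gg \frac{X}{(\log X)^2},
\]
valid for all sufficiently large $X$, with an implied constant independent of $X$.

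Next I would observe that the right-hand side $X/(\log X)^2$ tends to infinity as $X \to \infty$. Consequently the counting function on the left is unbounded in $X$, so the underlying set of primes
\[
\mathcal{P} := \{p\ \text{prime}: a_1(p)\neq a_2(p) \text{ and } \omega (a_1(p)-a_2(p)) \le  [ 7k+{1}/{2}+ k^{1/5}  ]\}
\]
cannot be finite, since a finite set of primes would yield a counting function bounded by a constant. This yields the infinitude asserted in the corollary.

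There is essentially no obstacle: the hypotheses of the corollary are identical to those of \thmref{main}, and the conclusion is strictly weaker (infinitude versus positive density of order $X/(\log X)^2$). The only thing to remark is that one should not conflate the condition $a_1(p) \neq a_2(p)$ with vacuity; although \thmref{main} must itself rule out the potentially infinite set of primes where the difference vanishes, for the corollary we simply inherit the non-vanishing condition directly from the set appearing in \eqref{main_estimate}.
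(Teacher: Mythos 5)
Your argument is correct and is exactly what the paper intends: Corollary \ref{infinite_prime} is stated as an immediate consequence of \thmref{main}, the point being that a counting function bounded below by $X/(\log X)^2$ is unbounded, so the underlying set of primes must be infinite. Nothing further is needed.
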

We now recall a multiplicity one result which says that if $a_1(p)=a_2(p)$ for all but finitely many primes $p$, then $f_1=f_2$. Rajan \cite{raj} has extensively generalized this result by proving that if $a_1(p)=a_2(p)$ for a set of primes $p$ of positive upper density, then $f_1$ is a character twists of $f_2$.  This is known as a strong multiplicity one result. Recently, in \cite{mp}, a variant of this result for normalised Fourier coefficients has been obtained.  In this direction, we prove in \propref{same_coefficient} that, under GRH, if   
\begin{equation}\label{same_coefficients}
	|\{p\le X : a_1(p)=a_2(p)\}| \gg X^{13/14+\epsilon}
\end{equation}
for any $\epsilon>0$, then $f_1$ is a character twists of $f_2$.
As a consequence of \thmref{main}, we obtain the following interesting result that can be seen as a variant of a multiplicity one result in terms of congruences.
\begin{cor}\label{multiplicity_one}	
	Let $f_1$ and $f_2$ be non-CM normalised newforms of weight $k_1$ and $k_2$ with integer Fourier coefficients $a_1(n)$ and $a_2(n)$, respectively. Put $k=\max\{k_1,k_2\}$ and assume GRH. If there exist primes $\ell_1, \ell_2, \dots \ell_n$ such that $n >  [ 7k+{1}/{2}+k^{1/5} ] $ and for each $1\le i\le n$
	\begin{equation}\label{multi}
		a_1(p)\equiv a_2(p) \pmod {\ell_i},	
	\end{equation}	
	for all $p$ except for a set of primes of order $o\left(\frac{X}{(\log X)^2}\right)$, then $k_1=k_2$ and $f_1$ is a character twists of $f_2$.
\end{cor}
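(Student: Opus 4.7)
The plan is to derive \corref{multiplicity_one} by contradiction, invoking \thmref{main} directly. Suppose the conclusion fails, so that either $k_1 \neq k_2$, or $k_1 = k_2$ while $f_1$ is not a character twist of $f_2$. In either situation the hypotheses of \thmref{main} are satisfied (the additional requirement ``not character twists of each other if $k_1 = k_2$'' is vacuous when $k_1 \neq k_2$), and hence under GRH the set
$$
S_X := \{p \le X : a_1(p) \neq a_2(p) \text{ and } \omega(a_1(p) - a_2(p)) \le [7k + 1/2 + k^{1/5}]\}
$$
has cardinality $|S_X| \gg X/(\log X)^2$.

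For each $i = 1, \dots, n$, let $T_i(X) := \{p \le X : a_1(p) \not\equiv a_2(p) \pmod{\ell_i}\}$ be the exceptional set appearing in the hypothesis, so by assumption $|T_i(X)| = o(X/(\log X)^2)$. Since $n$ is a fixed constant (depending only on $k$), the finite union $T(X) := \bigcup_{i=1}^{n} T_i(X)$ also satisfies $|T(X)| = o(X/(\log X)^2)$. Consequently, $|S_X \setminus T(X)| \gg X/(\log X)^2$, and in particular this set is non-empty for all sufficiently large $X$.

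Pick any prime $p \in S_X \setminus T(X)$. Then $a_1(p) - a_2(p)$ is a nonzero integer that is divisible by each of the distinct primes $\ell_1, \dots, \ell_n$, which forces $\omega(a_1(p) - a_2(p)) \ge n > [7k + 1/2 + k^{1/5}]$. This directly contradicts the defining inequality for membership in $S_X$. Hence the contradiction hypothesis is untenable, and we must have $k_1 = k_2$ with $f_1$ a character twist of $f_2$.

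The proof is essentially bookkeeping once \thmref{main} is in hand, and there is no serious obstacle. The two points worth flagging are: (i) the primes $\ell_1, \dots, \ell_n$ are implicitly taken to be distinct (as the inequality $n > [7k + 1/2 + k^{1/5}]$ on their count makes natural), and (ii) $n$ is fixed so a finite union of sets of size $o(X/(\log X)^2)$ remains of the same order — this is precisely what lets the almost-prime bound from $S_X$ clash with the forced divisibility of $a_1(p) - a_2(p)$ by $\ell_1 \cdots \ell_n$.
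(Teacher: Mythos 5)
Your proof is correct and follows essentially the same route as the paper's: argue by contradiction via \thmref{main}, observe that outside a union of exceptional sets of size $o\left(\frac{X}{(\log X)^2}\right)$ the difference $a_1(p)-a_2(p)$ is divisible by $\ell_1\cdots\ell_n$ and hence has at least $n$ distinct prime factors, contradicting the lower bound from \thmref{main}. Your write-up is a touch more explicit than the paper's (you spell out why the negated conclusion puts you in the scope of \thmref{main}, and you flag the implicit distinctness of the $\ell_i$), but there is no substantive difference in method.
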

\begin{proof}
	On the contrary, assume that $f_1$ is not a character twists of $f_2$.
	For $1\le i\le n$, let $B_i(X)= \{ p\le X: a_1(p)\not\equiv a_2(p) \pmod {\ell_i}\}$. Put $B(X)=\cup_{i=0}^n B_i(X)$. Then for $p\notin B(X)$
	%$$
	%a_1(p)\neq a_2(p) ~{\rm and}~\ell_1\ell_2\dots \ell_n|(a_1(p)-a_2(p)).
	%$$
	%In other words
	$$
	%a_1(p)\neq a_2(p) ~{\rm and}~
	\ell_1\ell_2\dots \ell_n |(a_1(p)-a_2(p)) \implies \omega(a_1(p)-a_2(p))\ge n.
	$$
	In particular, 
	$$
	\{p\le X: a_1(p)\neq a_2(p) ~{\rm and}~\omega(a_1(p)-a_2(p))\le  [ 7k+{1}/{2}+k^{1/5}  ] \} \subset B(X).%:=\{  p\le X: p\in B\}.
	$$
	But from our assumptions in
	\eqref{multi} we have $B(X)=o\left(\frac{X}{(\log X)^2}\right)$
	and this contradicts \thmref{main}.
\end{proof}
%We emphasize that \corref{multiplicity_one} just assumes a finite number of congruences between the $p$th Fourier coefficients instead of exact equality and we even may skip infinitely many primes $p$. Thus under GRH \corref{multiplicity_one} strengthened multiplicity one result significantly.

We now mention the last application  of \thmref{main} which is related to the number of congruence primes of a newform. Recall that for a newform $f_1\in S_k(N_1)$ with integer Fourier coefficients $a_1(n)$, a positive integer $D$ is called a congruence divisor if there exists another newform $f_2\in S_k(N_2)$  with integer Fourier coefficients $a_2(n)$  which is not a character twists of $f_1$ such that $f_1$ and $f_2$ are congruent modulo $D$, i.e., $a_1(n)\equiv a_2(n)\pmod D$ for all $(n,N_1N_2)=1$. Indeed, this is equivalent to the condition that 
$a_1(p)\equiv a_2(p)\pmod D$
for all $(p,N_1N_2)=1$. Moreover, if $D$ is a prime it is called a congruence prime and we refer  \cite{gha} for a nice overview of the subject.    A congruence divisor of a newform is an important object to study as it is connected to many well-known problems. To name a few,  a bound of the largest congruence divisor is related to the ABC conjecture, and if $k=2$, then the congruence primes for $f_1$ are related to the prime divisors of the minimal degree of the modular parametrization to the elliptic curve attached to $f_1$ via the Eichler-Shimura mapping (see \cite{ram_murty}).  It would be also of great interest to bound the number of congruence primes of a newform (cf.  Remark on page no. 180 of \cite{ram_murty}).  
However, if we fix two newforms, then  the following result gives a bound on the number of congruence primes which  is immediate by \corref{infinite_prime}. 
\begin{cor}\label{congruence_divisor}
	Let $f_1$ and $f_2$ be newforms as in \thmref{main}. Suppose there exists a positive integer $D$ such that $f_1$ and $f_2$ are congruent modulo $D$. Then under GRH
	$$
	\omega(D) \le  [ 7k+{1}/{2}+k^{1/5}  ].
	$$
\end{cor}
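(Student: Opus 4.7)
The plan is to derive the bound immediately from \corref{infinite_prime}. The hypothesis that $f_1$ and $f_2$ are congruent modulo $D$, as recalled just above \corref{congruence_divisor}, translates into the statement $a_1(p) \equiv a_2(p) \pmod{D}$ for every prime $p$ with $(p, N_1 N_2) = 1$, equivalently $D \mid (a_1(p) - a_2(p))$ for all but finitely many primes.

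Next I would invoke \corref{infinite_prime}: under GRH there exist infinitely many primes $p$ such that $a_1(p) \neq a_2(p)$ and $\omega(a_1(p) - a_2(p)) \le [7k + 1/2 + k^{1/5}]$. Since only finitely many primes divide $N_1 N_2$, one can select such a prime $p$ that is additionally coprime to $N_1 N_2$. For this $p$ the integer $a_1(p) - a_2(p)$ is nonzero and divisible by $D$, so every prime divisor of $D$ is also a prime divisor of $a_1(p) - a_2(p)$, which yields
$$
\omega(D) \;\le\; \omega(a_1(p) - a_2(p)) \;\le\; \left[\, 7k + \tfrac{1}{2} + k^{1/5} \,\right].
$$

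There is essentially no obstacle here; all the work has been done in \thmref{main} and its consequence \corref{infinite_prime}. The only point that deserves a one-line justification is the simultaneous fulfillment of the two conditions imposed on $p$: that $a_1(p) - a_2(p)$ has few prime divisors \emph{and} that $(p, N_1 N_2) = 1$. The first set is infinite by \corref{infinite_prime} while the second excludes only the finitely many prime divisors of $N_1 N_2$, so their intersection is non-empty and the argument goes through.
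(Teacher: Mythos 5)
Your argument is correct and is precisely the one the paper intends when it states the corollary is ``immediate by Corollary~\ref{infinite_prime}'': pick, via that corollary, a prime $p$ coprime to $N_1N_2$ with $a_1(p)\neq a_2(p)$ and few prime divisors of $a_1(p)-a_2(p)$, and note $D\mid (a_1(p)-a_2(p))$ forces $\omega(D)\le\omega(a_1(p)-a_2(p))$. Nothing further is needed.
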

Since each prime divisor of $D$ gives a congruence between $f_1$ and $f_2$, hence \corref{congruence_divisor} ensures that the number of primes giving congruences between two newforms is bounded uniformly only on their weights and not on the levels. This is the novelty of this result.

We now discuss some results about the function $\Omega (a_1(p)-a_2(p))$, where $p$ varies over the set of primes. Using a similar idea as the proof of \thmref{main}, we obtain the following.
\begin{thm}\label{Omega}
	Let $f_1$ and $f_2$ be as in \thmref{main}. Then under GRH, we have
	\begin{equation}\label{Omega_lower_bound}
		|\{p \le X: a_1(p)\neq a_2(p) ~ and~ \Omega (a_1(p)-a_2(p)) \le  [ 13k+{1}/{2}+\sqrt{\log k}   ]  \}|\gg \frac{X}{(\log X)^2}.
	\end{equation}
\end{thm}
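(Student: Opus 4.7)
The plan is to follow the same three-step pattern used to prove \thmref{main}: (i) build the sifting sequence of differences $a_1(p) - a_2(p)$; (ii) compute the congruence sums $|\mathcal{A}_d(X)|$ by feeding the product Galois representation $\rho_{f_1} \times \rho_{f_2}$ into Serre's conditional (GRH) effective Chebotarev theorem; and (iii) apply Richert's weighted sieve. Only the sieve weights and the optimization parameter $\lambda$ need to be modified so as to bound $\Omega$ rather than $\omega$.

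For the first step I would take
$$
\mathcal{A}(X) = \bigl\{\, a_1(p) - a_2(p) \;:\; p \le X,\ p \nmid N_1 N_2,\ a_1(p) \ne a_2(p)\,\bigr\}.
$$
By \propref{same_coefficient}, the primes with $a_1(p) = a_2(p)$ contribute only $O(X^{13/14+\epsilon})$, so discarding them does not spoil the target size $X/(\log X)^2$. For squarefree $d$, the condition $d \mid a_1(p) - a_2(p)$ cuts out a union of conjugacy classes in the image of $\rho_{f_1, d} \times \rho_{f_2, d}$; Serre's conditional Chebotarev theorem then provides, for $d$ up to a small fixed power of $X$, an asymptotic
$$
|\mathcal{A}_d(X)| = g(d)\, \pi(X) + R_d,
$$
with a multiplicative density satisfying $g(\ell) \approx 1/\ell$ (so the sieve is of dimension $\kappa = 1$) and an error term $R_d$ under control. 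This is exactly the Chebotarev input used in \thmref{main}.

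For the second step I would apply the $\Omega$-variant of Richert's weighted sieve. The weights in this variant are designed so that positivity on $a$ forces $\Omega(a)$ (not merely $\omega(a)$) to be small, at the price of a larger admissible threshold. Feeding the sieve asymptotics of the previous step into the weighted sum and carrying out the standard Buchstab-type iteration with optimization parameter $\lambda = 1/\sqrt{\log k}$ delivers the lower bound
$$
\bigl|\{p \le X : \Omega(a_1(p) - a_2(p)) \le [13k + 1/2 + \sqrt{\log k}] \}\bigr| \gg \frac{X}{(\log X)^2}.
$$

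The main obstacle is purely numerical: extracting the explicit constant $13k$ requires careful bookkeeping in the weighted sum, because each repeated prime factor contributes an extra amount to the Richert weight compared with the $\omega$-case. This is what pushes the constant up from $7k$ (in \thmref{main}) to $13k$, and simultaneously changes the optimal scale of the sieve parameter from $\lambda = k^{-1/5}$ to $\lambda = 1/\sqrt{\log k}$. Apart from this sieve-theoretic computation, every other ingredient — the product Galois representation, the GRH-Chebotarev input, and the reduction to sieve sums — carries over verbatim from the proof of \thmref{main}.
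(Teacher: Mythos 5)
Your proposal correctly identifies the overall framework (same sifting sequence, same Galois/Chebotarev input, Richert's weighted sieve with modified parameters), but there is a genuine gap: you have not explained how one actually passes from controlling $\omega$ to controlling $\Omega$.

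The paper does \emph{not} use a separate ``$\Omega$-variant of Richert's weighted sieve.'' It applies exactly the same \thmref{HRT} as in the proof of \thmref{main}, just with different sieve parameters $(\alpha, u, v, \lambda)$. That theorem's weighted sum controls only the number of \emph{distinct} primes in the middle range $[X^{1/v}, X^{1/u})$, together with a lower bound on the small primes (none below $X^{1/v}$). To conclude a bound on $\Omega$, one needs an additional input: one must show that, outside a negligible exceptional set of primes $p$, no prime $\ell$ in the middle range divides $a_1(p) - a_2(p)$ to the second power. The paper establishes this by applying \propref{asymptotic_pi} with modulus $h = \ell^2$ and summing over $X^{1/v} \le \ell \le X^{1/u}$: the main term contributes $\pi(X)\sum \ell^{-2} = o(X/(\log X)^2)$, while the error term is controlled precisely because the parameter $u$ is chosen larger than $26$ (this forces $\sum_{\ell \le X^{1/u}} \ell^{12} X^{1/2+\epsilon} = o(X/(\log X)^2)$). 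This $\ell^2$-square-divisibility estimate in turn rests on \lemref{C_l2} (the asymptotic $|\mathscr{C}_{\ell^2}| = \ell^{12}/\lambda_2 + O(\ell^{11})$), which is why the technical section develops \lemref{l2_computation} at all. Your account skips this step entirely, and your heuristic for the constant ($13k$ rather than $7k$ because ``repeated prime factors contribute extra Richert weight'') is not what happens: the constant changes because $u$ must be taken larger (from $(14k+1)/(k-1)$ to $(26k+1)/(k-1)$) to make the $\ell^2$-error sum negligible, and since the final bound is $\frac{1}{\lambda} + u\frac{k-1}{2} + o(1)$, the larger $u$ directly produces $13k + 1/2$ in place of $7k + 1/2$. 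Without the $\ell^2$-argument, the proof would only give a bound on $\omega$, not $\Omega$.
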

It is clear that \thmref{Omega} also has applications of similar nature to that of \thmref{main} mentioned above and we would not repeat here.   
\begin{rmk}\label{upper_bound}
	%It is natural to ask if 
	It is possible to obtain an upper bound of the right order of magnitude for estimate \eqref{Omega_lower_bound}. In fact, we can do so by using Selberg’s sieve and the ideas used in the proof of \cite[Theorem 2.3.1]{jos}. More precisely, under GRH, one can obtain that if $f_1$ and $f_2$ are as in \thmref{main}, then 	
	\begin{equation}\label{Omega_upper_bound}
		|\{p \le X: a_1(p)\neq a_2(p) ~ and~ \Omega (a_1(p)-a_2(p)) \le  [(29k-{13})/{2}] \}| \ll \frac{X}{(\log X)^2}.	
	\end{equation}
\end{rmk}
From \eqref{Omega_upper_bound}, it follows that under GRH
$$
|\{p \le X:  a_1(p)-a_2(p) ~is ~prime \}| \ll \frac{X}{(\log X)^2}.
$$
In particular,  the natural density of the set $\{p :  a_1(p)-a_2(p) ~is ~prime \}$ is zero. It would be interesting to obtain a suitable lower bound of this set or at least to know whether there are infinitely many primes $p$ for which $a_1(p)-a_2(p)$ is a prime.

%Let $a(n)$ and $b(n)$ be non-negative arithmetic functions. Then $a(n)$ is said to have normal order $b(n)$ if $|a(n)-b(n)|< \epsilon b(n)$ for any $\epsilon>0$ and for a set of integers $n$ of density 1.
%If $f$ is a newform with integer Fourier coefficient $a(n)$, then Murty and Murty \cite{mm} prove that, under GRH, for any $\epsilon>0$ and a set of primes of density 1
%$$
%|\omega(a(p))-\log\log p|<(\log\log p)^{1/2+\epsilon}.
%$$
%In particular, it follows that $\omega(a(p))$ has normal order $\log \log p$. In a subsequent paper \cite{mm2}, they also prove an analogue of the Erd\H{o}s-Kac theorem for $\omega(a(p))$. In this spirit, we study the normal order of $\omega(a_1(p)-a_2(p))$ and we show that its average behaviour  is similar to the average behaviour of $\omega(a(p))$. In particular, this shows that the mean value of the number of prime divisors of $a_1(p)-a_2(p)$ is also $\log\log p$.  More precisely:
% we prove the following.	
%\begin{thm}\label{normal_order}	Let $f_1$ and $f_2$ be as in \thmref{main}. Then under GRH and for any $\beta \in \mathbb R$ we have	
%\begin{equation*}
%\lim_{X \rightarrow \infty}\frac{1}{\pi(X)} | \{p \le X: a_1(p)\neq a_2(p) ~ and~ \frac{\omega (a_1(p)-a_2(p))-\log\log p}{\sqrt{\log\log p}} \le \beta \}|= G(\beta),
%	\end{equation*}	
%	where $G(\beta)= \frac{1}{\sqrt{2\pi}}\int_{-\infty}^\beta e^{-\frac{t^2}{2}} dt$	is the normal distribution.
%\end{thm}
In fact, all the above results are valid even if we replace $a_1(p)-a_2(p)$ with $a_1(p)+a_2(p)$. Also,  similar results but with better bounds hold in \thmref{main} and \thmref{Omega} if we assume Artin’s holomorphicity conjecture in addition to GRH. %(see \rmkref{sum_Fourier_coefficients} and \rmkref{artin}). 
It is also worth mentioning that the full strength of GRH is not essential to prove our theorems. Rather, a quasi-GRH, which assumes a zero-free region for the associated zeta functions in some fixed half-plane to the left of Re$(s) = 1$ is sufficient for our purpose.

\subsection*{Contents and structure of the paper} 
The theorem of Deligne connecting the theory of $\ell$-adic Galois representations to Fourier coefficients of newforms open the door for obtaining many new results regarding the arithmetical nature of these coefficients. This theorem plays an important role in the paper.
To prove our results, we first establish  \propref{asymptotic_pi*}
which gives an asymptotic formula for the number of primes $p$ up to $X$ for which $a_1(p)\neq a_2 (p)$ and  $a_1(p)-a_2 (p)$ is divisible by a fixed positive integer. We use  Galois representations attached to modular forms and
Chebotarev density theorem which is recalled in Section \ref{S_basic}. Proof of  \propref{asymptotic_pi*} requires computations of  the image of the product Galois representations attached to forms $f_1$ and $f_2$ and this is obtained in Section \ref{S_technical}. 
%As sieve methods are quite useful for detecting almost prime numbers, 
Finally, we apply a suitably weighted sieve due to Richert, recalled in Section \ref{S_sieving}, to prove our results. To establish the sieve conditions with the required uniformity of parameters \propref{asymptotic_pi*} plays a crucial role.
We use the ideas employed in \cite{swa, swb, jos} to prove our main results in Sections \ref{S_proof_main} and \ref{S_proof_Omega}.% and \ref{S_proof_normal}. %are modeled on ideas used in \cite{swa, swb, jos}.

\subsection*{Notation and conventions}
By GRH, we mean the Generalized 
Riemann Hypothesis; i.e., Riemann Hypothesis for all Artin $L$-functions. For 
any real number $X\ge 2$,  $\pi(X)$ denotes the number of primes less than or equal to $X$.
Along with the standard analytic notation $\ll, \gg, O, o, \sim$ (the  implied constants will often depend on the pair of forms under consideration), we use the letters
$p, \ell, q, \ell_1, \ell_2$ etc. to denote prime numbers throughout the paper. 
%By density of a subset $S$ of the set of primes, we mean the natural density; i.e., $\lim_{x\rightarrow \infty} \frac{|S\cap[1, x]|}{\pi(x)}$ ($|A|$ denotes the size of a subset $A$ of $\mathbb{N}$), if the limit exists.

\section{Preliminaries}\label{S_basic}
In this section, we summarize some standard results without proofs which will be used throughout the paper. We closely follow \cite{gkk} for our exposition.
\subsection{Chebotarev density theorem}
We recall the Chebotarev density theorem which is one of the principal tools needed for proving
the main theorems of this paper.

Let $K$ be a finite Galois extension of $\Q$ with the Galois group $G$ and degree $n_K$. 
For an unramified prime $p$, we denote by ${\rm Frob}_p$, a Frobenius element of $K$ at $p$ in $G$.
For a subset $C$ of $G$, stable under conjugation, we define 
$$ 
\pi_C(X):=\{p\le X: p~ {\rm unramified ~in}~ K ~{\rm and ~Frob}_p\in C\}.
$$
The Chebotarev density theorem states that
$$
\pi_C(X)\sim \frac{|C|}{|G|}\pi(X).
$$
We will use the following conditional effective version of this theorem which was first obtained by Lagarias and Odlyzko \cite{lo} and was subsequently refined by Serre \cite{ser}. To state this, let $d_K$ be the absolute value of the discriminant of $K/\Q$ and  $\zeta_K(s)$ be the Dedekind zeta function associated with $K$. 
\begin{proposition}\label{effective_cdt}
	Suppose $\zeta_K(s)$ 	satisfies the Riemann hypothesis. Then
	\begin{equation*}
		\pi_C(X)= \frac{|C|}{|G|}\pi(X)+O\bigg(\frac{|C|}{|G|}X^{{1}/{2}}(\log d_K+n_K\log X)\bigg).
	\end{equation*}
	%		where $d_K$ denotes the absolute value of the discriminant of $K/\Q$.
\end{proposition}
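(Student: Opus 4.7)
The plan is to follow the classical approach of Lagarias--Odlyzko with Serre's refinement, which produces the sharp factor $|C|/|G|$ in the error term (rather than the weaker $(|C|/|G|)^{1/2}$ one would obtain from a naive Cauchy--Schwarz). The core idea is to translate the counting of Frobenius elements in a conjugation-stable set into an analytic statement about zeros of Artin $L$-functions, and then to exploit GRH to localize those zeros on the critical line.

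First I would expand the indicator function of $C$ in irreducible characters of $G$: write $1_C = \sum_{\chi} a_\chi \chi$ with $a_\chi = \tfrac{1}{|G|}\sum_{g\in C}\overline{\chi(g)}$, so that $a_1 = |C|/|G|$ for the trivial character by orthogonality. Substituting this expansion into the prime-power Chebyshev sum
$$
\psi_C(X) \;=\; \sum_{\substack{p^m\le X\\ \mathrm{Frob}_{p^m}\in C}} \log p
$$
rewrites $\psi_C(X)$ as $\sum_\chi a_\chi\,\psi(X,\chi)$, where $\psi(X,\chi)$ is the logarithmic-derivative sum attached to the Artin $L$-function $L(s,\chi)$. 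The trivial character contributes the main term $\tfrac{|C|}{|G|}X$, and the remaining task is to bound the contribution of the non-trivial characters.

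Next I would apply the truncated explicit formula to each $L(s,\chi)$. Truncating the zero sum at height $T$, using GRH to place every non-trivial zero $\rho$ on $\mathrm{Re}(\rho)=\tfrac12$, and invoking the standard zero-counting bound $N(T,\chi)\ll \chi(1)\log(T\mathfrak{f}(\chi)) + n_K\log T$ (here $\mathfrak{f}(\chi)$ denotes the Artin conductor) yields, after optimizing $T$, a character-level estimate of the shape
$$
\psi(X,\chi) \;\ll\; \chi(1)\,X^{1/2}\bigl(\log \mathfrak{f}(\chi)+n_K\log X\bigr).
$$

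The hard part, and the content of Serre's refinement, is assembling the character-by-character bounds into a final error term carrying a clean factor $|C|/|G|$. The two crucial ingredients are the trivial bound $|a_\chi|\le \chi(1)\,|C|/|G|$ and the conductor--discriminant formula $\log d_K = \sum_{\chi}\chi(1)\log\mathfrak{f}(\chi)$, which together let the sum over irreducible characters collapse into $\log d_K$. Combining these with the character-wise estimate above produces the claimed bound on $\psi_C(X)$. Finally, Abel summation converts this estimate into the stated bound for $\pi_C(X)$; the contribution of prime squares and higher prime powers is only $O(X^{1/2})$ and is absorbed by the error term.
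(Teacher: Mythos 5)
Your sketch captures the right spirit (explicit formula, GRH, conductor--discriminant, the bound $|a_\chi|\le\chi(1)|C|/|G|$), and with the slightly corrected character-level estimate $\psi(X,\chi)\ll X^{1/2}\bigl(\log\mathfrak{f}(\chi)+\chi(1)\log X\bigr)$ your accounting does collapse to $\frac{|C|}{|G|}X^{1/2}(\log d_K+n_K\log X)$ via $\sum_\chi\chi(1)\log\mathfrak{f}(\chi)=\log d_K$ and $\sum_\chi\chi(1)^2=n_K$. But there is a genuine gap: by expanding $1_C$ over \emph{all} irreducible characters of $G$ and running the explicit formula on each Artin $L(s,\chi)$, you implicitly assume Artin's holomorphy conjecture. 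For a higher-dimensional $\chi$, $L(s,\chi)$ is only known (via Brauer) to be meromorphic in the critical strip, and the hypothesis ``$\zeta_K$ satisfies RH'' does not by itself tell you anything about the individual factors of $\zeta_K=\prod_\chi L(s,\chi)^{\chi(1)}$: a zero of one factor off the line could be cancelled by a pole of another. So your route proves the proposition only under GRH \emph{plus} Artin, which is strictly stronger than what is claimed --- indeed the paper explicitly treats Artin as an \emph{additional} hypothesis that improves the error term.

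The ingredient you are missing is the abelian reduction of Deuring--Lagarias--Odlyzko--Serre, which is precisely what lets one avoid Artin. One first reduces to the case where $C$ is a single conjugacy class, fixes $g\in C$, sets $H=\langle g\rangle$ and $E=K^H$, and rewrites $\pi_C(X)$ (up to controlled error) in terms of the prime-counting functions for the one-dimensional characters of the cyclic group $H=\mathrm{Gal}(K/E)$. Those $L$-functions are Hecke $L$-functions of $E$, hence entire (for $\chi\neq 1$) by class field theory, and since $\zeta_K=\prod_{\chi\in\widehat H}L(s,\chi,K/E)$ is a product of holomorphic factors with no cancellation possible, RH for $\zeta_K$ \emph{does} transfer to each $L(s,\chi,K/E)$. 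The explicit formula is then applied only to these abelian $L$-functions, and the careful bookkeeping that yields the clean factor $|C|/|G|$ (Serre's refinement of Lagarias--Odlyzko, Th\'eor\`eme~4 of \cite{ser}) is carried out in that abelian setting. Without this reduction your argument does not establish the proposition as stated.
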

By assuming, in addition to GRH, Artin’s holomorphy conjecture, one can improve the
error term in the above asymptotic formula for 	$\pi_C(X)$.

\subsection{mod-$h$ Galois representations} 
%We now recall some of the fundamental results on $h$-adic  Galois representations associated with modular forms. 
Let $G_{\Q}= {\rm Gal}(\bar\Q/\Q)$ be the absolute Galois group of an algebraic closure $\bar\Q$ of $\Q$. Let $k\ge 2, N\ge 1$ and $ \ell $ be a prime. Suppose $f\in S_k(N)$  is a  newform
with integer Fourier coefficients $a (n)$. Work of Shimura, Deligne and Serre gives the existence of a 
two-dimensional  continuous, odd and irreducible Galois representation
\begin{equation*}
	\rho_{f, \ell }:G_{\Q}\rightarrow {\rm GL}_2(\Z_\ell )
\end{equation*}
which is unramified at $p\nmid N\ell$.  If ${\rm{Frob}}_p$ denotes a Frobenius element corresponding to such a prime, then the representation $\rho_{f, \ell }$ has the property that 
$$ {\rm{tr}}(\rho_ {f,\ell}({\rm{Frob}}_p))=a (p), ~~~~ {\rm {det}}(\rho_ {f,\ell}({\rm{Frob}}_p))=p^{k-1}.$$
By reduction and semi-simplification, we obtain a mod-$\ell$ Galois representation, namely 
$$ {\overline{\rho}}_ {f,\ell}:G_{\Q}\rightarrow {\rm GL}_2(\F_\ell), $$
where $\F_{\ell}:=\Z/\ell\Z$.

Let $h=\prod_{j=1}^{t}\ell_j^{n_j}$ be a positive integer.
Using the $\ell_j$-adic representations attached to $f$, 
we consider an $h$-adic representation
\begin{equation*}
	{\rho}_{f,h}:  G_\Q \rightarrow {\rm GL}_2\big(\prod_{1\le j\le t}\Z_{\ell_j}\big).
\end{equation*}
For each $1\le j\le t$, we have the natural projection $\Z_{\ell_j}\twoheadrightarrow {\Z/{\ell_j^{n_j}}\Z}$, 
and hence %by reduction and semi-simplifications, 
we obtain a mod-$h$ Galois representation given by 

$$ {\overline{\rho}}_{f,h} :G_\Q \rightarrow
{\rm GL}_2(\prod_{1\le j\le t}{{\Z/{\ell_j^{n_j}}\Z}}) \xrightarrow{\cong} {\rm GL}_2(\Z/h\Z).$$
Furthermore, if $p\nmid Nh$ is a prime, then $\bar{\rho}_{f,h}$ 
is unramified at $p$ and
$$ {\rm tr}\left(\bar{\rho}_{f,h}\lb{\rm Frob}_p\rb\right)=a (p),~~~~ 
{\rm det}\left(\bar{\rho}_{f,h}\lb{\rm Frob}_p\rb\right)= p^{k-1}. $$

Let $f_1 \in S_{k_1}(N_1)$ and $f_2 \in S_{k_2}(N_2)$ be   newforms having integer Fourier coefficients $a_1(n)$ and $a_2(n)$, respectively.
%We may consider these forms to be of level $N$, where $N={\rm lcm}(N_1,N_2)$. Suppose  $\bar{\rho}_{f_i,h}: G_\Q \rightarrow {\rm GL}_2(\Z/h)$, for $i=1,2$, denotes the reduction  of the $h$-adic representation associated to $f_i$. 
Then one can consider the product 
representation $\bar{\rho}_h$ of $\bar{\rho}_{f_1,h}$ and $\bar{\rho}_{f_2,h}$, defined by
\begin{align*}
	\bar{\rho}_h: G_\Q & \rightarrow {\rm GL}_2(\Z/h\Z)\times {\rm GL}_2(\Z/h\Z),\\
	\sigma & \mapsto (\bar{\rho}_{f_1,h}(\sigma), \bar{\rho}_{f_2,h}(\sigma)) \notag.
\end{align*}
Let $\mathscr{A}_h$ denote the image of $G_\Q$ under $\bar{\rho}_h$.
By the fundamental theorem of Galois theory,  the fixed field of ${\rm ker}(\bar\rho_h)$, say $L_h$, is a finite 
Galois extension of $\Q$ and
\begin{equation}\label{image}
	{\rm Gal}(L_h/ \Q) \cong \mathscr{A}_h.
\end{equation}
Let $\mathscr{C}_h$ be the subset of $\mathscr{A}_h$ defined by
\begin{equation*}\label{C_h}
	\mathscr{C}_h=\{(A,B)\in \mathscr{A}_h: {\rm tr}(A)={\rm tr}(B)\}.
\end{equation*}
We now define the following function on the set of positive integers which will play an important role throughout the paper. For an integer $h >1$, define
\begin{equation}\label{delta_definition}
	\delta(h):=\frac{|\mathscr{C}_h|}{|\mathscr{A}_h|}
\end{equation}
and $\delta(1):=1$. 
Since the trace of the 
image of complex conjugation is always zero, 
$\mathscr{C}_h\neq \phi$, and hence $\delta(h)>0$ for every integer $h$. 

\section{Technical results}\label{S_technical}
Let   $f_1$ and $f_2$ be newforms as before. The main aim of this section is to obtain an asymptotic size of  $\delta(\ell^n)$ for $n=1,2$ and this require the computation of the cardinalities of  $\mathscr{A}_{\ell^n}$ and $\mathscr{C}_{\ell^n}$.
%For this we now compute the cardinality of the sets $\mathscr{A}_{\ell}$ and $\mathscr{C}_{\ell}$, for any sufficiently large prime $\ell$. 
%In a special case when $N=1$, these computations together with the multiplicativity of $\delta$ gives an asymptotic 
%formula for $\delta(h)$, for a positive integer $h$. 
Building on the work of Ribet \cite{rib} and Momose \cite{mom},  Loeffler \cite[Theorem 3.2.2]{loe} has determined the image $\mathscr{A}_{\ell^n}$ of the product Galois representations. It follows that there exists a 
positive constant $M(f_1, f_2)$ such that for all primes $\ell \ge M(f_1, f_2)$ and $n\ge 1$
\begin{align}\label{loeffler}
	\mathscr{A}_{\ell^n}\!=\!\{(A, B)\!\in \!{\rm GL}_2(\Z/\ell^n\Z)\!\times\! {\rm GL}_2(\Z/\ell^n\Z)\!: 
	{\rm det}(A)\!=v^{k_1 -1}\!, {\rm det} (B)\!= v^{k_2 -1}\!, v \!\in\! {(\Z/\ell^n\Z)^{\times}} \}.
\end{align}
In other words, the mod-$\ell^n$ representations of two newforms (that are not character twists of each other)
are as independent as possible. In the rest of the paper, we denote the constant $M(f_1, f_2)$ by $M$ and without loss of generality, we assume that $M\ge 3$. 
Clearly, for $\ell \ge M$
\begin{align}\label{C_h_definition}
	\mathscr{C}_{\ell^n} = &\{(A, B) \in\mathscr{A}_{\ell^n}: 
	{\rm tr}(A)={\rm tr}(B) \}.
\end{align}

\subsection{Combinatorial lemmas} 
Here we obtain results about cardinalities of  $\mathscr{A}_{\ell^n}$ and $\mathscr{C}_{\ell^n}$. 
We first assume that
$$
\lambda_n=gcd~(\ell^{n}-\ell^{n-1},k_1-1,k_2-1)
$$
and
\begin{equation}\label{Lambda}
	\Lambda_n=\{ (v^{k_1-1},v^{k_2-1}): v\in \mathbb (\Z/{\ell^n}\Z)^{\times}  \}.
\end{equation}
We now consider the group homomorphism 
\begin{equation*}
	\phi:  \mathbb (\Z/{\ell^n}\Z)^{\times}\rightarrow \Lambda_n
	{\rm ~defined ~by} ~\phi(v)=  (v^{k_1-1},v^{k_2-1}).
\end{equation*}
Since $\phi$	is surjective and its kernel $\{v \in \mathbb (\Z/{\ell^n}\Z)^{\times}: v^{\lambda_n}=1\}$ is a cyclic subgroup of $\mathbb (\Z/{\ell^n}\Z)^\times$ of order $\lambda_n$, we obtain
\begin{equation}\label{Lambda_cardinality}
	|\Lambda_n|=\frac{| (\Z/{\ell^n}\Z)^{\times} |}{\lambda_n}
	=\frac{\ell^n-\ell^{n-1}}{\lambda_n}.
\end{equation}

We first recall the following result proved in \cite[Lemma 3.3]{gkk}. % giving the cardinality of $\mathscr{A}_{\ell}$.
\begin{lemma}\label{A_l_computation}
	For  any prime $\ell \ge M$ 
	$$ | \mathscr{A}_{\ell} | =\frac{1}{\lambda_1}(\ell-1)^3(\ell^2+\ell)^2.$$
\end{lemma}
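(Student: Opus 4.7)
The plan is to exploit the explicit description of $\mathscr{A}_{\ell^n}$ given in \eqref{loeffler}, specialized to $n=1$, and reduce the counting to two independent pieces: the count of admissible determinant pairs, and the count of matrices with prescribed determinant. Specifically, $\mathscr{A}_\ell$ fibers over $\Lambda_1$ via the map $(A,B)\mapsto(\det A,\det B)$, so that $|\mathscr{A}_\ell|=|\Lambda_1|\cdot|F|$, where $F$ is the fiber over any fixed $(d_1,d_2)\in\Lambda_1$.

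First I would invoke \eqref{Lambda_cardinality} with $n=1$ to record that $|\Lambda_1|=(\ell-1)/\lambda_1$. Next, for fixed $(d_1,d_2)$, the fiber is $\{A\in \mathrm{GL}_2(\F_\ell):\det A=d_1\}\times\{B\in \mathrm{GL}_2(\F_\ell):\det B=d_2\}$. Each determinant fiber in $\mathrm{GL}_2(\F_\ell)$ is a coset of $\mathrm{SL}_2(\F_\ell)$ and therefore has cardinality
\begin{equation*}
|\mathrm{SL}_2(\F_\ell)|=\frac{|\mathrm{GL}_2(\F_\ell)|}{|\F_\ell^\times|}=\frac{(\ell^2-1)(\ell^2-\ell)}{\ell-1}=\ell(\ell-1)(\ell+1).
\end{equation*}
Hence $|F|=\ell^2(\ell-1)^2(\ell+1)^2$, which is independent of the chosen pair $(d_1,d_2)$, so the fiber product structure gives
\begin{equation*}
|\mathscr{A}_\ell|=\frac{\ell-1}{\lambda_1}\cdot \ell^2(\ell-1)^2(\ell+1)^2=\frac{(\ell-1)^3(\ell^2+\ell)^2}{\lambda_1},
\end{equation*}
which is the claimed formula.

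There is no serious obstacle here: all the real work is hidden in \eqref{loeffler}, i.e.\ in the Ribet--Momose--Loeffler theorem that identifies $\mathscr{A}_{\ell^n}$ (for $\ell\ge M$) as the full subgroup of $\mathrm{GL}_2\times\mathrm{GL}_2$ cut out by the determinant compatibility. Once that input is granted, the lemma is a straightforward orbit-counting exercise, and the only point to be a bit careful about is noting that the hypothesis $\ell\ge M$ is used solely to justify applying \eqref{loeffler}, while the subsequent arithmetic identity is purely combinatorial.
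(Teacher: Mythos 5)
Your proof is correct. The paper does not actually prove this lemma itself; it cites it from the reference \cite[Lemma 3.3]{gkk}, so there is no internal argument to compare against. That said, your argument is exactly the standard one: after granting the Ribet--Momose--Loeffler description \eqref{loeffler}, the set $\mathscr{A}_\ell$ fibers over $\Lambda_1$ via the pair of determinants, each fiber is a product of two $\mathrm{SL}_2(\F_\ell)$-cosets of size $\ell(\ell-1)(\ell+1)$, and $|\Lambda_1|=(\ell-1)/\lambda_1$ by \eqref{Lambda_cardinality}, yielding $\frac{1}{\lambda_1}(\ell-1)^3(\ell^2+\ell)^2$ after the rearrangement $(\ell^2+\ell)^2=\ell^2(\ell+1)^2$. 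Your remark about where the hypothesis $\ell\ge M$ is used (solely to invoke \eqref{loeffler}) is also accurate and worth keeping in mind, since it explains why the combinatorics that follow are unconditional.
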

Using \lemref{A_l_computation}, we now compute $| \mathscr{A}_{\ell^n} | $ for any $n\ge 1$.
\begin{lemma}\label{A_ln_computation}
	For  any prime $\ell \ge M$ and integer $n \ge 1$
	$$ | \mathscr{A}_{\ell^n} | =\frac{1}{\lambda_n}\ell^{7(n-1)}(\ell-1)^3(\ell^2+\ell)^2.$$
	%and hence  $\displaystyle{| \mathscr{A}_{\ell} | \sim \frac{\ell^7}{\lambda}}$ as $\ell \rightarrow \infty$.
\end{lemma}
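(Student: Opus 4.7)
The plan is to realise $\mathscr{A}_{\ell^n}$ as a union of fibres of the joint determinant map and count. By Loeffler's description in \eqref{loeffler}, for $\ell\ge M$ a pair $(A,B)\in\mathrm{GL}_2(\Z/\ell^n\Z)^2$ lies in $\mathscr{A}_{\ell^n}$ if and only if $(\det A,\det B)\in \Lambda_n$. Hence the map
\[
\mathscr{A}_{\ell^n}\longrightarrow \Lambda_n,\qquad (A,B)\longmapsto(\det A,\det B),
\]
is surjective, and since $\det\colon\mathrm{GL}_2(\Z/\ell^n\Z)\to(\Z/\ell^n\Z)^\times$ is surjective with kernel $\mathrm{SL}_2(\Z/\ell^n\Z)$, every fibre has cardinality $|\mathrm{SL}_2(\Z/\ell^n\Z)|^2$. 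Consequently
\[
|\mathscr{A}_{\ell^n}|=|\Lambda_n|\cdot|\mathrm{SL}_2(\Z/\ell^n\Z)|^2.
\]

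The first factor has already been computed in \eqref{Lambda_cardinality} as $|\Lambda_n|=\ell^{n-1}(\ell-1)/\lambda_n$. For the second, I would use the determinant sequence $1\to\mathrm{SL}_2(\Z/\ell^n\Z)\to\mathrm{GL}_2(\Z/\ell^n\Z)\xrightarrow{\det}(\Z/\ell^n\Z)^\times\to 1$ together with the standard lift $|\mathrm{GL}_2(\Z/\ell^n\Z)|=\ell^{4(n-1)}(\ell^2-1)(\ell^2-\ell)$, which follows because every element of $\mathrm{GL}_2(\F_\ell)$ lifts in $\ell^{4(n-1)}$ ways (any matrix of the form $I+\ell X$ is automatically invertible modulo $\ell^n$). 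Dividing by $|(\Z/\ell^n\Z)^\times|=\ell^{n-1}(\ell-1)$ yields
\[
|\mathrm{SL}_2(\Z/\ell^n\Z)|=\ell^{3n-2}(\ell^2-1).
\]

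Putting the two pieces together,
\[
|\mathscr{A}_{\ell^n}|=\frac{\ell^{n-1}(\ell-1)}{\lambda_n}\cdot\ell^{6n-4}(\ell^2-1)^2=\frac{1}{\lambda_n}\,\ell^{7n-5}(\ell-1)^3(\ell+1)^2=\frac{1}{\lambda_n}\,\ell^{7(n-1)}(\ell-1)^3(\ell^2+\ell)^2,
\]
which is the claimed identity; the last equality uses $\ell^{7n-5}=\ell^{7(n-1)}\cdot\ell^2$ together with $(\ell+1)^2\ell^2=(\ell^2+\ell)^2$. Specialising to $n=1$ recovers \lemref{A_l_computation}, a useful consistency check. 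The whole argument is essentially a counting exercise once Loeffler's image result is in hand, and the only step with any content is the $\mathrm{SL}_2$ lifting count; an alternative proof by induction on $n$ via the natural reduction $\mathscr{A}_{\ell^n}\to\mathscr{A}_{\ell^{n-1}}$ would require the same lifting ingredient, so I would prefer the direct fibre-count above.
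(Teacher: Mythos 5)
Your proof is correct, and the final arithmetic checks out: $\frac{\ell^{n-1}(\ell-1)}{\lambda_n}\cdot\ell^{6n-4}(\ell^2-1)^2=\frac{1}{\lambda_n}\ell^{7n-5}(\ell-1)^3(\ell+1)^2=\frac{1}{\lambda_n}\ell^{7(n-1)}(\ell-1)^3(\ell^2+\ell)^2$. However, your route is genuinely different from the paper's. The paper does \emph{not} fibre over $\Lambda_n$ via the joint determinant; instead it takes Lemma~\ref{A_l_computation} (the $n=1$ case) as a black box and studies the surjective reduction map $\psi\colon\mathscr{A}_{\ell^n}\to\mathscr{A}_{\ell}$, writing $|\mathscr{A}_{\ell^n}|=|\ker\psi|\cdot|\mathscr{A}_{\ell}|$. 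It then computes $|\ker\psi|$ by summing over $(d_1,d_2)\in\Lambda_n$ with $d_1\equiv d_2\equiv1\pmod\ell$ the count of pairs of matrices congruent to the identity with prescribed determinants, using the fact (their~\eqref{ca}) that for $d\equiv1\pmod\ell$ there are exactly $\ell^{3(n-1)}$ matrices $\gamma\equiv\mathrm{Id}\pmod\ell$ with $\det\gamma=d$; this yields $|\ker\psi|=\frac{|\Lambda_n|}{|\Lambda_1|}\ell^{6(n-1)}$. Your fibre count over $\Lambda_n$ with fibre $|\mathrm{SL}_2(\Z/\ell^n\Z)|^2$ is essentially the transpose of this computation and is arguably cleaner: it is a single, uniform count that does not require Lemma~\ref{A_l_computation} as input (indeed, specialising at $n=1$ reproves it), whereas the paper's argument factors the problem through the mod-$\ell$ level and relies on a separate count there. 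The underlying lifting ingredient --- that fibres of the reduction $\mathrm{GL}_2(\Z/\ell^n\Z)\to\mathrm{GL}_2(\F_\ell)$ have size $\ell^{4(n-1)}$, or equivalently that determinant-fixed fibres over the identity have size $\ell^{3(n-1)}$ --- is the same in both, as you anticipated.
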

\begin{proof}
	Let $\psi:  \mathscr{A}_{\ell^n} \rightarrow  \mathscr{A}_{\ell}$ be the natural reduction map. Since it is a surjective group homomorphism, we have
	$$
	|\mathscr{A}_{\ell^n} |= |{\rm ker}(\psi)|~ |\mathscr{A}_{\ell} |.
	$$
	Therefore, in view of \lemref{A_l_computation}, to evaluate  $|\mathscr{A}_{\ell^n}|$ it is sufficient to compute $|{\rm ker}(\psi)|$. For that we first record the following result that can be proved easily:  for a given $d \in \mathbb (\Z/{\ell^n}\Z)^{\times}$  with $d\equiv 1 \pmod \ell$
	\begin{equation}\label{ca}
		|\{  \gamma\in {\rm GL}_2(\Z/\ell^n\Z): {\rm det}(\gamma)= d, \gamma\equiv {\rm Id}\pmod \ell 
		\}| =\ell^{3(n-1)},
	\end{equation}
	where Id is the identity element in ${\rm GL}_2(\F_\ell)$.
	Now, we note that 
	\begin{align*}
		{\rm ker}(\psi)= \{(A, B)\in \mathscr{A}_{\ell^n}:   (A,B)\equiv ({\rm Id, Id})\pmod \ell \}|,
	\end{align*}
	%where Id is the identity element in ${\rm GL}_2(\Z/\ell\Z)$, 
	therefore from \eqref{loeffler}
	\begin{align*}
		|{\rm ker}(\psi)|&= \sum_{(d_1,d_2)\in \Lambda_n} \sum_{\substack{A\in {\rm GL}_2(\Z/\ell^n\Z)\\ {\rm {det}}(A)=d_1, A\equiv {\rm Id}\pmod \ell }}1 \sum_{\substack{B\in {\rm GL}_2(\Z/\ell^n\Z)\\ {\rm {det}}(B)=d_2, B\equiv{\rm Id} \pmod \ell }}1.
	\end{align*}
	%Note that  the condition $ A\equiv {\rm Id}\pmod \ell $ forces that $d_1\equiv 1\pmod \ell$. Moreover, if $d_1\equiv 1 \pmod \ell$, then one can easily show that 
	%$$
	%|\{  A\in {\rm GL}_2(\Z/\ell^n\Z): {\rm det}(A)= d_1, A\equiv {\rm Id}\pmod \ell 
	%\}| =\ell^{3(n-1)}.
	%$$
	In the above, congruence conditions on $A$ and $B$  compel that $d_1\equiv d_2\equiv 1\pmod \ell$ and hence using \eqref{ca} gives
	$$
	|{\rm ker}(\psi)|= \ell^{6(n-1)}\sum_{\substack{(d_1,d_2)\in \Lambda_n\\
			d_1\equiv d_2\equiv 1 \pmod \ell}}1.
	$$
	Since the sum appearing on the right side of the above is the cardinality of the kernel of  the natural (surjective) reduction map $ \Lambda_n\rightarrow \Lambda_1$, therefore
	$$
	|{\rm ker}(\psi)|= \frac{|\Lambda_n|}{|\Lambda_1|} \ell^{6(n-1)}.
	$$
	Now using   \eqref{Lambda_cardinality} in the above yields the desired result.
\end{proof}

Our next aim is to compute the cardinalities of $\mathscr{C}_{\ell}$ and $\mathscr{C}_{\ell^2}$. Though an explicit computation  is possible, we only obtain asymptotic formulas here and that is enough for our purpose.
To simplify our notation, we  denote the set of quadratic  and non-quadratic residue elements in $(\Z/{\ell^n}\Z)^{\times}$ by $Q_n$ and $Q_n^c$, respectively. %Then we first prove that

\begin{lemma}\label{C_l_computation_general}
	For any prime $\ell \ge M$,
	$$ | \mathscr{C}_{\ell} | =\frac{\ell^6}{\lambda_1}+O(\ell^5).$$
	%and thus  $\displaystyle{| \mathscr{C}_{\ell} | \sim \frac{\ell^6}{\lambda}}$ as $\ell \rightarrow \infty$. 
\end{lemma}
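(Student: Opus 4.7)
The plan is to parameterise $\mathscr{C}_\ell$ by the pair of allowed determinants $(d_1,d_2)\in\Lambda_1$ together with the common trace $t\in\F_\ell$. Using the description \eqref{loeffler} of $\mathscr{A}_\ell$ (with $n=1$) and \eqref{C_h_definition}, the two factors decouple once $(d_1,d_2,t)$ is fixed, so
\[
|\mathscr{C}_\ell|=\sum_{(d_1,d_2)\in\Lambda_1}\sum_{t\in\F_\ell} N(t,d_1)\,N(t,d_2),
\]
where $N(t,d)$ denotes the number of matrices in $\mathrm{GL}_2(\F_\ell)$ with trace $t$ and determinant $d$. The problem is thereby reduced to an explicit count of $N(t,d)$ followed by an elementary character sum estimate.

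The key step is the computation of $N(t,d)$ via the conjugacy class structure of $\mathrm{GL}_2(\F_\ell)$; since $\ell\geq M\geq3$, the prime $\ell$ is odd and the characteristic polynomial $x^2-tx+d$ is well behaved. If $t^2-4d=0$, the scalar class contributes one matrix while the non-scalar Jordan class has centraliser of order $\ell(\ell-1)$, hence class size $\ell^2-1$, giving $N(t,d)=\ell^2$. If $t^2-4d$ is a nonzero square, the matrix is split semisimple with diagonal centraliser of order $(\ell-1)^2$, so $N(t,d)=\ell^2+\ell$. If $t^2-4d$ is a non-square, the matrix is non-split semisimple with centraliser $\cong\F_{\ell^2}^\times$ of order $\ell^2-1$, so $N(t,d)=\ell^2-\ell$. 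Letting $\chi$ denote the Legendre symbol extended by $\chi(0)=0$, all three cases package into the single formula
\[
N(t,d)=\ell^2+\ell\,\chi(t^2-4d).
\]

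Substituting and expanding gives
\[
N(t,d_1)N(t,d_2)=\ell^4+\ell^3\bigl[\chi(t^2-4d_1)+\chi(t^2-4d_2)\bigr]+\ell^2\chi(t^2-4d_1)\chi(t^2-4d_2).
\]
The leading contribution, via \eqref{Lambda_cardinality} at $n=1$, is
\[
\sum_{(d_1,d_2)\in\Lambda_1}\sum_{t\in\F_\ell}\ell^4=\ell^5|\Lambda_1|=\frac{\ell^5(\ell-1)}{\lambda_1}=\frac{\ell^6}{\lambda_1}-\frac{\ell^5}{\lambda_1}=\frac{\ell^6}{\lambda_1}+O(\ell^5).
\]
For the remaining pieces I use the classical evaluation $\sum_{t\in\F_\ell}\chi(t^2-c)=-1$ for every $c\in\F_\ell^\times$, which applies since the determinants $d_i$ are units. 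This bounds the linear cross term by $2\ell^3|\Lambda_1|=O(\ell^4)$, while the quadratic term is bounded trivially by $\ell^2|\Lambda_1|\cdot\ell=O(\ell^4)$. Collecting all contributions yields the claimed $|\mathscr{C}_\ell|=\ell^6/\lambda_1+O(\ell^5)$.

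The only mildly subtle point is the case analysis for $N(t,d)$: one must remember that when $t^2=4d$ the fibre splits into the scalar matrix and the Jordan companion class, rather than constituting a single conjugacy class. Everything thereafter is routine bookkeeping; in fact a Weil-bound treatment of the degree-$4$ character sum would shave the error to $O(\ell^{9/2})$, but this refinement is unnecessary for the stated $O(\ell^5)$.
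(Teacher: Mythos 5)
Your argument is correct and rests on the same decomposition as the paper: write $|\mathscr{C}_\ell|$ as a double sum over $(d_1,d_2)\in\Lambda_1$ and the common trace $t$, and use the count $N(t,d)$ of matrices in $\mathrm{GL}_2(\F_\ell)$ with prescribed trace $t$ and determinant $d$, which equals $\ell^2+\ell$, $\ell^2$, or $\ell^2-\ell$ according as $t^2-4d$ is a nonzero square, zero, or a non-square --- exactly the paper's formula \eqref{id11}. Where you diverge is in the bookkeeping. The paper performs an explicit three-way split on the value of $t^2-4d_1$, then splits each resulting sum again on $t^2-4d_2$, tracks the pieces $S_1,S_2,S_3$, and collects the $\ell^4$ contributions by hand. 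You instead package \eqref{id11} into the single identity $N(t,d)=\ell^2+\ell\,\chi(t^2-4d)$ with $\chi$ the Legendre symbol extended by $\chi(0)=0$, expand $N(t,d_1)N(t,d_2)$, and dispatch the linear cross terms with the classical evaluation $\sum_{t\in\F_\ell}\chi(t^2-c)=-1$ for $c\neq0$, bounding the remaining quadratic character term trivially. This collapses the paper's nested case tree into one short expansion plus a character-sum estimate, and as you observe, the secondary terms can be sharpened with Weil bounds; that refinement is moot for the lemma as stated, since the $-\ell^5/\lambda_1$ correction already present in the main term pins the overall error at $O(\ell^5)$. Both routes are valid; yours is more compact.
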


\begin{proof}
	
	From the definition of  $\mathscr{C}_{\ell}$
	\begin{align*}
		|\mathscr{C}_{\ell}|
		&=  \sum_{(d_1,d_2)\in \Lambda_1} |\{(A,B)\in {\rm GL}_2({\F_\ell})\times {\rm GL}_2({\F_\ell}): {\rm {det}}(A)=d_1, {\rm {det}}(B)=d_2, {\rm tr}(A)={\rm tr}(B)\}|\notag\\
		&=\sum_{t \in \mathbb F_{\ell}}  \sum_{(d_1,d_2)\in  \Lambda_1}   \sum_{\substack{A\in {\rm GL}_2(\F_\ell)\\ {\rm {det}}(A)=d_1, {\rm tr}(A)=t}}1 \sum_{\substack{B\in {\rm GL}_2({\F_\ell})\\ {\rm {det}}(B)=d_2, {\rm tr}(B)=t}}1.
	\end{align*}	
	Split the sum over $\Lambda_1$ 
	into three parts, namely  %as follows%in the following way
	\begin{align}\label{three}
		|\mathscr{C}_{\ell}|
		&= \sum_{t \in \mathbb F_{\ell}}  \bigg[ \sum_{\substack{(d_1,d_2)\in  \Lambda_1\\ t^2-4d_1\in Q_1}}+ 
		\sum_{\substack{(d_1,d_2)\in  \Lambda_1\\ t^2=4d_1}}+ \sum_{\substack{(d_1,d_2)\in  \Lambda_1\\ t^2-4d_1\in Q_1^c}} \bigg] \sum_{\substack{A\in {\rm GL}_2(\mathbb F_\ell)\\ {\rm {det}}(A)=d_1, {\rm tr}(A)=t}}1 \sum_{\substack{B\in {\rm GL}_2(\mathbb F_\ell)\\ {\rm {det}}(B)=d_2, {\rm tr}(B)=t}}1
	\end{align}
	and we denote the corresponding sums by $S_1$, $S_2$ and $S_3$, respectively. Thus
	\begin{equation*}
		S_1=\sum_{t \in \mathbb F_{\ell}}  \sum_{\substack{(d_1,d_2)\in  \Lambda_1\\ t^2-4d_1\in Q_1}}  \sum_{\substack{A\in {\rm GL}_2(\mathbb F_\ell)\\ {\rm {det}}(A)=d_1, {\rm tr}(A)=t}}1 \sum_{\substack{B\in {\rm GL}_2(\mathbb F_\ell)\\ {\rm {det}}(B)=d_2, {\rm tr}(B)=t}}1.
	\end{equation*}	
	To proceed further, note that for 
	given $d \in\mathbb F_\ell^{\times}$ and $t \in \mathbb F_{\ell}$ one can obtain the following result by employing the elementary counting arguments.
	\begin{align}\label{id11}
		&|\{\gamma \in {\rm GL}_2(\mathbb F_\ell): ~{\rm {det}}(\gamma)=d, {\rm tr}(\gamma)=t \}| =
		\begin{cases}
			\ell^2+\ell, & t^2-4d\in Q_1,\\
			\ell^2, & t^2=4d,\\
			\ell^2-\ell, &  t^2-4d\in Q_1^c. 
		\end{cases}
	\end{align}
	Using  \eqref{id11} gives
	\begin{align*}
		S_1&=(\ell^2+\ell) \sum_{t \in \mathbb F_{\ell}}  \sum_{\substack{(d_1,d_2)\in  \Lambda_1\\ t^2-4d_1\in Q_1}} 
		\sum_{\substack{B\in {\rm GL}_2(\mathbb F_\ell)\\ {\rm {det}}(B)=d_2, {\rm tr}(B)=t}}1\\	
		&= (\ell^2+\ell) \sum_{t \in \mathbb F_{\ell}}  \bigg[ \sum_{\substack{(d_1,d_2)\in  \Lambda_1\\ t^2-4d_1\in Q_1\\  t^2-4d_2\in Q_1}} +  \sum_{\substack{(d_1,d_2)\in  \Lambda_1\\ t^2-4d_1\in Q_1\\  t^2=4d_2}} + \sum_{\substack{(d_1,d_2)\in  \Lambda_1\\ t^2-4d_1\in Q_1\\  t^2-4d_2\in Q_1^c}} 
		\bigg] \sum_{\substack{B\in {\rm GL}_2(\mathbb F_\ell)\\ {\rm {det}}(B)=t_2, {\rm tr}(B)=t}}1.
	\end{align*}
	Again using  \eqref{id11}
	\begin{align*}
		S_1
		&= (\ell^2+\ell) \sum_{t \in \mathbb F_{\ell}}  \bigg[(\ell^2+\ell) \sum_{\substack{(d_1,d_2)\in  \Lambda_1\\ t^2-4d_1\in Q_1\\  t^2-4d_2\in Q_1}} 1+ \ell^2 \sum_{\substack{(d_1,d_2)\in  \Lambda_1\\ t^2-4d_1\in Q_1\\  t^2=4d_2}} 1+ (\ell^2-\ell) \sum_{\substack{(d_1,d_2)\in  \Lambda_1\\ t^2-4d_1\in Q_1\\  t^2-4d_2\in Q_1^c}} 1
		\bigg].
	\end{align*}
	Collecting the terms containing  $\ell^4$ gives
	\begin{align*}
		S_1&= \ell^4 \sum_{t \in \mathbb F_{\ell}}  \sum_{\substack{(d_1,d_2)\in  \Lambda_1\\ t^2-4d_1\in Q_1}}  1+ O(\ell^5).
	\end{align*}
	Similarly, we have  
	$$
	S_2= \ell^4 \sum_{t \in \mathbb F_{\ell}}  \sum_{\substack{(d_1,d_2)\in  \Lambda_1\\ t^2=4d_1}}   1
	+ O(\ell^5)
	$$
	$$
	S_3= \ell^4 \sum_{t \in \mathbb F_{\ell}}  \sum_{\substack{(d_1,d_2)\in  \Lambda_1\\ t^2-4d_1\in Q_1^c}}1
	+ O(\ell^5).
	$$
	Combining all together, we have, from \eqref{three}
	\begin{align*}
		|\mathscr{C}_{\ell}|
		&=\ell^4 \sum_{t \in \mathbb F_{\ell}}  \sum_{(d_1,d_2)\in  \Lambda_1} 1 +O(\ell^5)
	\end{align*}
	and now using \eqref{Lambda_cardinality} completes the proof.
\end{proof}
To compute $|\mathscr{C}_{\ell^2}|$, we first prove the  following result which is a generalisation of \eqref{id11} for the ring $\Z/\ell^2 \Z$.
\begin{lemma}\label{l2_computation}
	For any $d\in  (\Z/\ell^2\Z)^\times$ and $t \in  \Z/\ell^2\Z$, we have
	\begin{align}\label{id12}
		&|\{\gamma \in {\rm GL}_2(\Z/\ell^2\Z): ~{\rm {det}}(\gamma )=d, {\rm tr}(\gamma )=t \}| =
		\begin{cases}
			\ell^4+\ell^3-\ell^2, & t^2-4d=0,\\
			\ell^4-\ell^2, & 0\neq t^2-4d\equiv 0 \pmod \ell,\\
			\ell^4+\ell^3, & t^2-4d\in Q_2,\\
			\ell^4-\ell^3, &  t^2-4d\in Q_2^c. 
		\end{cases}
	\end{align}
\end{lemma}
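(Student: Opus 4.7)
My strategy is to parametrise every $\gamma \in {\rm GL}_2(\Z/\ell^2\Z)$ as a first-order perturbation of its reduction modulo $\ell$. Fixing the canonical entry-wise lift $\widetilde{\gamma}_0$ of $\gamma_0 := \gamma \bmod \ell \in {\rm GL}_2(\F_\ell)$, every $\gamma$ lifting $\gamma_0$ is uniquely of the form $\gamma = \widetilde{\gamma}_0 + \ell M$ with $M \in M_2(\F_\ell)$. The standard first-order identities
\begin{align*}
	{\rm tr}(\widetilde{\gamma}_0 + \ell M) &= {\rm tr}(\widetilde{\gamma}_0) + \ell\, {\rm tr}(M), \\
	{\rm det}(\widetilde{\gamma}_0 + \ell M) &\equiv {\rm det}(\widetilde{\gamma}_0) + \ell\, {\rm tr}({\rm adj}(\widetilde{\gamma}_0)\, M) \pmod{\ell^2}
\end{align*}
translate the conditions ${\rm tr}(\gamma) = t$ and ${\rm det}(\gamma) = d$ into: (i) the fibre condition $({\rm tr}(\gamma_0), {\rm det}(\gamma_0)) = (\bar t, \bar d)$, whose cardinality is supplied by \eqref{id11}; and (ii) two $\F_\ell$-linear conditions $\phi_1(M) := {\rm tr}(M) = c_1$ and $\phi_2(M) := {\rm tr}({\rm adj}(\gamma_0)\, M) = c_2$ on $M \in M_2(\F_\ell) \cong \F_\ell^4$, where $c_1, c_2 \in \F_\ell$ are explicit functions of $t, d, \widetilde{\gamma}_0$.

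The number of $M$ satisfying these two linear equations is $\ell^2$, $\ell^3$, or $0$ depending on whether $\phi_1, \phi_2$ are independent, dependent-consistent, or dependent-inconsistent. Since the trace pairing on $M_2(\F_\ell)$ is non-degenerate, $\phi_1$ and $\phi_2$ are proportional iff ${\rm adj}(\gamma_0) \in \F_\ell \cdot I$, which (for invertible $\gamma_0$) is equivalent to $\gamma_0$ being a scalar matrix. Since $\ell$ is odd (as $\ell \ge M \ge 3$), such a $\gamma_0$ exists in the fibre precisely when $\bar t^2 = 4\bar d$, in which case it is the unique matrix $(\bar t/2)\, I$.

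A case analysis on $t^2 - 4d \in \Z/\ell^2\Z$ then completes the proof. When $t^2 - 4d \in (\Z/\ell^2\Z)^\times$, the fibre contains no scalar element, so each of its $\ell^2 \pm \ell$ matrices contributes $\ell^2$ lifts, producing $\ell^4 \pm \ell^3$ in the $Q_2$ and $Q_2^c$ cases. When $t^2 - 4d \equiv 0 \pmod \ell$, the fibre has $\ell^2$ elements: the $\ell^2 - 1$ non-scalars contribute $(\ell^2 - 1)\ell^2$ lifts, while the scalar $(\bar t/2)I$ contributes $\ell^3$ or $0$ depending on whether the degenerate system is consistent. The main obstacle is pinning down this scalar contribution: a direct calculation with $a = \bar t/2$ gives $a c_1 - c_2 = (t^2 - 4d)/(4\ell) \bmod \ell$, so the system is consistent exactly when $t^2 - 4d$ vanishes in $\Z/\ell^2\Z$. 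Summing the contributions yields $\ell^4 + \ell^3 - \ell^2$ in that subcase and $\ell^4 - \ell^2$ in the other, matching the four claimed formulas.
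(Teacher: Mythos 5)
Your proposal is correct, and it takes a genuinely different route from the paper. The paper parametrises $\gamma = \bigl(\begin{smallmatrix} a & b \\ c & t-a \end{smallmatrix}\bigr)$ and reduces \eqref{id12} to a direct count of triples $(a,b,c) \in (\Z/\ell^2\Z)^3$ satisfying $a^2 - at + bc = -d$, split by hand into the cases $a=0$, $a \neq 0 \wedge bc = 0$, and $a \neq 0 \wedge bc \neq 0$, with a separate root-counting lemma for $a^2 - at + d$. You instead treat the reduction map ${\rm GL}_2(\Z/\ell^2\Z) \to {\rm GL}_2(\F_\ell)$ as a fibration: over each $\gamma_0$ in the mod-$\ell$ fibre (whose size is already known from \eqref{id11}), the lifts form an affine space, and the conditions become the two linear forms ${\rm tr}(M)$ and ${\rm tr}({\rm adj}(\gamma_0)M)$ on $M_2(\F_\ell)$, which by non-degeneracy of the trace pairing are proportional exactly when $\gamma_0$ is scalar. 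This is essentially a smoothness argument for the trace-determinant map, and it isolates precisely where the count degenerates (the single scalar point over $\bar t^2 = 4\bar d$) instead of re-deriving everything by cases. Your approach is more conceptual, makes the unit cases ($t^2-4d \in Q_2$ or $Q_2^c$) transparent via Hensel lifting of squares, and would iterate cleanly to $\Z/\ell^n\Z$; the paper's approach is more elementary and self-contained. One small point worth stating explicitly when you write this up: you should note that for a unit $u \in (\Z/\ell^2\Z)^\times$ with $\ell$ odd, $u \in Q_2$ iff $\bar u \in Q_1$, which is what lets you feed the $Q_1$/$Q_1^c$ fibre counts from \eqref{id11} directly into the $Q_2$/$Q_2^c$ cases.
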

\begin{proof} 
	It is clear that 
	$$|\{\gamma \in {\rm GL}_2(\Z/\ell^2\Z): ~{\rm {det}}(\gamma )=d, {\rm tr}(\gamma )=t \}| = |\mathscr N|,$$
	where $\mathscr N:=  \{(a,b,c)\in (\Z/\ell^2\Z)^3: a^2-at+bc=-d\}$.
	To compute $|\mathscr N|$ we divide the set $\mathscr N$ into three disjoint subsets $\mathscr N_1$, $\mathscr N_2$ and $\mathscr N_3$ based on the following three cases, respectively. Hence 
	\begin{equation}\label{idN}
		|\mathscr N| = |\mathscr N_1| + |\mathscr N_2|+ |\mathscr N_3|.
	\end{equation}
	{\bf Case (i): $a=0$.} Then the condition $bc=-d$  forces that  $b$ and $c$ both have to be units and for any $b$ there exists a unique $c$. Hence
	$$
	|\mathscr N_1|=\ell^2-\ell.
	$$
	{\bf Case (ii): $a\neq 0$ and $bc=0$.}  The latter condition implies that either $b$ or $c$ is 0, or both are (non-zero) zero-divisors of  $\Z/\ell^2\Z$.  The total number of such pairs is $2\ell^2-1+(\ell-1)^2=3\ell^2-2\ell$. Therefore,
	\begin{equation}\label{N2}
		|\mathscr N_2|= |\{a \in \Z/\ell^2\Z: a^2-at+d= 0\}| \times (3\ell^2-2\ell).
	\end{equation}
	%To get the total choices for $a$, 
	We now  claim that
	\begin{equation}\label{solution}
		|\{a\in \Z/\ell^2\Z: a^2-at+d=0\} |=
		\begin{cases}
			\ell, & t^2-4d=0, \\
			0, &  0\neq  t^2-4d \equiv 0\pmod \ell,\\
			2, &  t^2-4d\in Q_2, \\
			0, & t^2-4d\in Q_2^c. \\
		\end{cases}
	\end{equation}
	To prove this, we see that if $t^2-4d=0$, then any $a\equiv \frac{t}{2} \pmod \ell$ is a solution of $a^2-at+d=0$ and there are $\ell$ such choices for $a$. Next, assume that $0\neq t^2-4d\equiv 0\pmod \ell$. If  $a^2-at+d=0$ has solutions, say $x$ and $y$, then $(x-y)^2=(x+y)^2-4xy =t^2-4d \equiv 0\pmod \ell$. Therefore, 
	$x-y \equiv 0 \pmod \ell \implies t^2-4d=(x-y)^2=0$ which is a contradiction. % and hence there are no solutions in this case.
	The last two cases are clear. \\	
	Thus using \eqref{solution} in \eqref{N2} gives the cardinality of $\mathscr N_2$.\\
	%	\begin{equation*}
	%|\mathscr N_2|=
	%	\begin{cases}
	%	\ell(2\ell^2-1)(\ell-1)^2, & t^2-4d=0, \\
	%	0, &  0\neq  t^2-4d \equiv 0 \pmod \ell,\\
	%	2(2\ell^2-1)(\ell-1)^2, &  t^2-4d\in Q_2, \\
	%	0, & t^2-4d\in Q_2^c. \\
	%	\end{cases}
	%	\end{equation*}
	{\bf Case (iii): $a\neq 0$ and $bc\neq0$.} In this case, $bc$ can be either a (non-zero) zero divisor or a unit. Clearly, the number of choices for $b$ and $c$ such that $bc$ is a given non-zero zero divisor is $2\ell(\ell-1)$ and for a given unit the number of such choices is $\ell^2-\ell$. Therefore, we have
	\begin{align}\label{case3}
		|\mathscr N_3|=& |\{a\in \Z/\ell^2\Z: 0\neq a^2-at+d\equiv 0\pmod \ell\}| \times 2\ell(\ell-1)\notag\\
		& +|\{a\in \Z/\ell^2\Z: a^2-at+d\in (\Z/\ell^2\Z)^\times\}|\times  (\ell^2-\ell).
	\end{align}
	If $a^2-at+d=m\ell$ for some $m\in \F_{\ell}^\times$, then from  \eqref{solution} 
	\begin{equation*}
		|\{a\in \Z/\ell^2\Z: a^2-at+d=m\ell \}|=
		\begin{cases}
			\ell, & t^2-4(d-m\ell)=0, \\
			0, &  0\neq  t^2-4(d-m\ell) \equiv 0 \pmod \ell,\\ %\iff   t^2-4d \pmod \ell,\\
			2, &  t^2-4(d-m\ell)\in Q_2\iff   t^2-4d\in Q_2, \\
			0, & t^2-4(d-m\ell)\in Q_2^c \iff   t^2-4d\in Q_2^c. \\
		\end{cases}
	\end{equation*}
	Note that there exists a unique  $m\in \F_{\ell}^\times$ such that   $t^2-4(d-m\ell)=0$ and in that case $0\neq t^2-4d\equiv 0\pmod \ell$. Therefore 
	\begin{equation}\label{solution1}
		|\{a\in \Z/\ell^2\Z: 0\neq a^2-at+d\equiv 0 \pmod \ell \}|=
		\begin{cases}
			0, & t^2-4d=0, \\
			\ell, &  0\neq  t^2-4d \equiv 0 \pmod \ell,\\
			2(\ell-1), &  t^2-4d\in Q_2, \\
			0, & t^2-4d\in Q_2^c. \\
		\end{cases}
	\end{equation}
	As we have $\ell^2-1$ choices of $a$ in this case, \eqref{solution} and \eqref{solution1} immediately gives 
	\begin{equation}\label{solution2}
		|\{a\in \Z/\ell^2\Z: a^2-at+d\in (\Z/\ell^2\Z)^\times \}|=
		\begin{cases}
			\ell^2-\ell-1, & t^2-4d=0, \\
			\ell^2-\ell-1, &  0\neq  t^2-4d \equiv 0 \pmod \ell,\\
			\ell^2-2\ell-1, &  t^2-4d\in Q_2, \\
			\ell^2-1, & t^2-4d\in Q_2^c. \\
		\end{cases}
	\end{equation}
	Substituting \eqref{solution1} and \eqref{solution2} in \eqref{case3} and then combining all the above three cases in \eqref{idN} gives the desired result.	 
\end{proof}

Using \lemref{l2_computation} and following a similar argument as in the proof of \lemref{C_l_computation_general}, we obtain:
\begin{lemma}\label{C_l2}
	For any prime $\ell \ge M$,
	$$ | \mathscr{C}_{\ell^2} | =\frac{\ell^{12}}{\lambda_2}+O(\ell^{11}).$$
\end{lemma}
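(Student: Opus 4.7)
The plan is to mimic the proof of \lemref{C_l_computation_general} almost verbatim, substituting the counting identity \eqref{id11} over $\F_\ell$ by its $\Z/\ell^2\Z$ analogue \lemref{l2_computation}. Writing
$$N(d,t) := |\{\gamma \in {\rm GL}_2(\Z/\ell^2\Z): {\rm det}(\gamma) = d,\ {\rm tr}(\gamma) = t\}|,$$
the starting identity, obtained from \eqref{C_h_definition} and \eqref{loeffler} at $n = 2$, is
$$|\mathscr{C}_{\ell^2}| = \sum_{t \in \Z/\ell^2\Z}\ \sum_{(d_1,d_2) \in \Lambda_2} N(d_1,t)\, N(d_2,t).$$
The decisive input from \lemref{l2_computation} is that in each of the four possible cases for $t^2-4d$ (namely $t^2-4d=0$; $0 \ne t^2-4d \equiv 0\pmod\ell$; $t^2-4d \in Q_2$; $t^2-4d \in Q_2^c$) one has $N(d,t) = \ell^4 + O(\ell^3)$, with an absolute implied constant.

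Next I would split the inner double sum over $(d_1,d_2)\in\Lambda_2$ into the sixteen sub-regions obtained by crossing the four cases for $t^2-4d_1$ with the four cases for $t^2-4d_2$, paralleling the three-way split into $S_1, S_2, S_3$ in the proof of \lemref{C_l_computation_general}. In each sub-region one factors the leading term $\ell^4$ out of both $N(d_1,t)$ and $N(d_2,t)$, so that $N(d_1,t)\, N(d_2,t) = \ell^8 + O(\ell^7)$ uniformly in $t, d_1, d_2$. Summing over $t \in \Z/\ell^2\Z$ and $(d_1,d_2) \in \Lambda_2$, and using $|\Lambda_2| = (\ell^2-\ell)/\lambda_2$ from \eqref{Lambda_cardinality}, gives
$$|\mathscr{C}_{\ell^2}| = \ell^8 \cdot \ell^2 \cdot |\Lambda_2| + O\!\left(\ell^7 \cdot \ell^2 \cdot |\Lambda_2|\right) = \ell^{10} \cdot \frac{\ell^2-\ell}{\lambda_2} + O(\ell^{11}) = \frac{\ell^{12}}{\lambda_2} + O(\ell^{11}),$$
where the subleading $\ell^{11}/\lambda_2$ coming from the main term is absorbed into the $O(\ell^{11})$ error (since $\lambda_2 \ge 1$).

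There is no genuine obstacle here beyond the combinatorial bookkeeping of the sixteen sub-regions; the entire reason the argument transfers cleanly is the uniform shape $N(d,t) = \ell^4 + O(\ell^3)$ in all four cases of \lemref{l2_computation}. The one novelty compared with the $\F_\ell$ proof is the extra sub-case $0 \ne t^2-4d \equiv 0 \pmod\ell$, which has no analogue when the modulus is prime; but since its count $\ell^4 - \ell^2$ still fits the common asymptotic $\ell^4 + O(\ell^3)$, it merely enlarges the bookkeeping from three to four cases per factor without altering the final error rate.
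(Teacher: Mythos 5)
Your proof is correct and follows exactly the route the paper indicates: the paper gives no details for \lemref{C_l2}, saying only that it follows from \lemref{l2_computation} by the same argument as \lemref{C_l_computation_general}, and you fill in that argument accurately, including the key observation that $N(d,t) = \ell^4 + O(\ell^3)$ uniformly across all four cases of \eqref{id12} and the correct count $|\Lambda_2| = (\ell^2-\ell)/\lambda_2$ from \eqref{Lambda_cardinality}.
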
 
%By \eqref{loeffler}, it follows that for $\ell>M$, the natural reduction  maps $\mathscr{A}_{\ell^n} \rightarrow \mathscr{A}_{\ell}$ and $\mathscr{C}_{\ell^n} \rightarrow \mathscr{C}_{\ell}$ are surjective. Hence by \lemref{A_l_computation} and \lemref{C_l_computation_general} we have
Let $h=\ell_1^{n_1}\ell_2^{n_2}\dots \ell_t^{n_t}$. % be the prime factorization. 
Since the fixed field of ${\rm ker}(\bar\rho_h)$ is contained in the compositum of fixed fields of ${\rm ker}(\bar\rho_{\ell_i^{n_i}})$, from \eqref{image}
$$
|\mathscr{A}_h|\le   |\mathscr{A}_{\ell_1^{n_1}}|~ |\mathscr{A}_{\ell_2^{n_2}}|\dots  |\mathscr{A}_{\ell_t^{n_t}}|
\hspace{20pt} {\rm and}\hspace{20pt}
|\mathscr{C}_h|\le  |\mathscr{C}_{\ell_1^{n_1}}|~ |\mathscr{C}_{\ell_2^{n_2}}|\dots  |\mathscr{C}_{\ell_t^{n_t}}|.
$$
Since for any prime $\ell$ and integer $n\ge 1$, $\mathscr{A}_{\ell^n}$ is contained in the set
\begin{align*}
	\{(A, B)\in {\rm GL}_2(\Z/\ell^n\Z)\times {\rm GL}_2(\Z/\ell^n\Z): 
	{\rm det}(A)=v^{k_1 -1}, {\rm det} (B)= v^{k_2 -1}, v \in {(\Z/\ell^n\Z)^{\times}} \},
\end{align*}
hence a simple counting argument gives
\begin{equation*}\label{aln}
	|\mathscr{A}_{\ell^n}|\ll \ell^{7n} \hspace{20pt} {\rm and }\hspace{20pt}
	|\mathscr{C}_{\ell^n}|\ll \ell^{6n}.
\end{equation*}
Therefore now it is clear that for any integer $h\ge 1$
\begin{equation}\label{upper_bound_C_h}
	|\mathscr{A}_h|\ll h^7
	\hspace{20pt} {\rm and }\hspace{20pt}
	|\mathscr{C}_h|\ll {h^6}.
\end{equation}

%\begin{remark}\label{Ch'}
%If we define $\mathscr{C}_h'=\{(A,B)\in \mathscr{A}_h: {\rm tr}(A)=-{\rm tr}(B)\} $, then all the results for $\mathscr{C}_h$ above, in particular, \lemref{C_l_computation_general} and \ref{C_l2},  are true even for $\mathscr{C}_h'$. This is due to the fact that \eqref{id11} and \eqref{id12} do not alter after replacing $t$ with $-t$. This is useful to prove our results for sum of Fourier coefficients instead of their difference.  
%\end{remark}

\subsection{Asymptotic size of  $\delta(\ell)$}
Recall that for any positive integer $h> 1$, 
$$\delta(h)=\frac{|\mathscr{C}_h|}{|\mathscr{A}_h|}.
$$
An immediate consequence of the results in the previous section, we have  the following.
\begin{proposition}\label{asymptotic_delta}
	As
	$\ell$ varies over
	primes then for  $n =1,2$
	\begin{equation*}
		\delta(\ell^n)\sim \frac{1}{\ell^n} {~  as ~  } \ell \rightarrow \infty.
	\end{equation*}
\end{proposition}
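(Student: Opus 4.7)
The plan is to combine the asymptotic cardinality formulas for $\mathscr{A}_{\ell^n}$ and $\mathscr{C}_{\ell^n}$ that have already been established in Lemmas \ref{A_ln_computation}, \ref{C_l_computation_general}, and \ref{C_l2}, and then simply form the ratio. Since all the combinatorial work has been carried out in the preceding lemmas, the proposition reduces to a straightforward division, with one small wrinkle to address regarding the dependence on $\lambda_n$.

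For the case $n=1$, I would apply Lemma \ref{A_l_computation} to write
\[
|\mathscr{A}_\ell| \;=\; \frac{(\ell-1)^3(\ell^2+\ell)^2}{\lambda_1} \;=\; \frac{\ell^7}{\lambda_1} + O\!\left(\frac{\ell^6}{\lambda_1}\right),
\]
and Lemma \ref{C_l_computation_general} to write $|\mathscr{C}_\ell| = \ell^6/\lambda_1 + O(\ell^5)$. Dividing and using the bound $\lambda_1 \le \gcd(k_1-1,k_2-1)$ (which is a constant independent of $\ell$) yields
\[
\delta(\ell) \;=\; \frac{|\mathscr{C}_\ell|}{|\mathscr{A}_\ell|} \;=\; \frac{\ell^6/\lambda_1 + O(\ell^5)}{\ell^7/\lambda_1 + O(\ell^6)} \;\sim\; \frac{1}{\ell}
\]
as $\ell \to \infty$.

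For $n=2$, the argument is identical: Lemma \ref{A_ln_computation} gives $|\mathscr{A}_{\ell^2}| = \ell^{14}/\lambda_2 + O(\ell^{13}/\lambda_2)$, Lemma \ref{C_l2} gives $|\mathscr{C}_{\ell^2}| = \ell^{12}/\lambda_2 + O(\ell^{11})$, and since $\lambda_2 \le \gcd(k_1-1,k_2-1)$ is bounded uniformly in $\ell$, the ratio is asymptotic to $1/\ell^2$.

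There is essentially no obstacle here; the only conceptual point worth being explicit about is that $\lambda_n = \gcd(\ell^n-\ell^{n-1},k_1-1,k_2-1)$ divides $\gcd(k_1-1,k_2-1)$, so it is $O(1)$ as $\ell \to \infty$. Without this observation the asymptotic $1/\ell^n$ would not be immediate from the formulas, since the leading terms of both numerator and denominator carry a factor of $1/\lambda_n$; thankfully these factors cancel and the bounded denominators contribute only to the implied constants in the error terms.
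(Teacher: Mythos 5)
Your proof is correct and takes exactly the approach the paper intends: the paper states this proposition without proof as ``an immediate consequence'' of Lemmas \ref{A_l_computation}, \ref{A_ln_computation}, \ref{C_l_computation_general}, and \ref{C_l2}, and you have simply filled in the division. The one substantive observation you make — that $\lambda_n \mid \gcd(k_1-1,k_2-1)$ is bounded independently of $\ell$ (using $k_1, k_2 \ge 2$), so that the error term $O(\ell^{6n-1})$ in $|\mathscr{C}_{\ell^n}|$ is indeed negligible compared to the main term $\ell^{6n}/\lambda_n$ — is precisely the detail the paper leaves to the reader, and you have identified it correctly.
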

Using the explicit description of $\mathscr{A}_\ell$ and $\mathscr{C}_\ell$ given in  \eqref{loeffler} and \eqref{C_h_definition}, one can easily prove that 
if $f_1$ and $f_2$ are newforms as before then the following holds.
\begin{proposition}\label{delta_multiplicative_large_prime}
	For  primes $\ell_1, \ell_2>M$ with $\ell_1 \neq \ell_2$, we have
	$$
	\delta(\ell_1 \ell_2)=\delta(\ell_1) \delta(\ell_2).
	$$
\end{proposition}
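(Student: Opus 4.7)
The plan is to establish the product decomposition $\mathscr{A}_{\ell_1\ell_2} = \mathscr{A}_{\ell_1} \times \mathscr{A}_{\ell_2}$ via the Chinese Remainder Theorem, from which the analogous decomposition $\mathscr{C}_{\ell_1\ell_2} = \mathscr{C}_{\ell_1} \times \mathscr{C}_{\ell_2}$ (and hence the multiplicativity $\delta(\ell_1\ell_2) = \delta(\ell_1)\delta(\ell_2)$) is immediate. CRT gives $\mathrm{GL}_2(\Z/\ell_1\ell_2\Z) \cong \mathrm{GL}_2(\F_{\ell_1}) \times \mathrm{GL}_2(\F_{\ell_2})$, under which $\bar\rho_{\ell_1\ell_2}$ corresponds to the pair of reductions $(\bar\rho_{\ell_1},\bar\rho_{\ell_2})$. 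Consequently, writing $L_h$ for the fixed field of $\ker(\bar\rho_h)$, we have $L_{\ell_1\ell_2}=L_{\ell_1}L_{\ell_2}$, and $\mathscr{A}_{\ell_1\ell_2}$ embeds in $\mathscr{A}_{\ell_1}\times\mathscr{A}_{\ell_2}$ with both projections surjective. The equality we want is thus equivalent (by Goursat's lemma) to the linear disjointness $L_{\ell_1}\cap L_{\ell_2}=\Q$.

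To verify disjointness, I will use the explicit description \eqref{loeffler}. For $\ell_i\ge M$ (which, enlarging $M$ if necessary, I take to be $\ge 5$), $\mathscr{A}_{\ell_i}$ contains $\mathrm{SL}_2(\F_{\ell_i})\times\mathrm{SL}_2(\F_{\ell_i})$ as a normal subgroup with quotient $\Lambda_1^{(\ell_i)}$, and the determinant characters $\det \bar\rho_{f_j,\ell_i}=\chi_{\ell_i}^{k_j-1}$ (powers of the mod-$\ell_i$ cyclotomic character) show that the maximal abelian subextension of $L_{\ell_i}/\Q$ lies inside $\Q(\zeta_{\ell_i})$. On the other hand, the non-abelian composition factors of $\mathrm{Gal}(L_{\ell_i}/\Q)$ are copies of the simple group $\mathrm{PSL}_2(\F_{\ell_i})$. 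For distinct primes $\ell_1,\ell_2\ge 5$, (i) $\Q(\zeta_{\ell_1})\cap\Q(\zeta_{\ell_2})=\Q$ by classical cyclotomic theory, and (ii) $|\mathrm{PSL}_2(\F_{\ell_1})|\neq|\mathrm{PSL}_2(\F_{\ell_2})|$, so $\mathscr{A}_{\ell_1}$ and $\mathscr{A}_{\ell_2}$ share no non-trivial common quotient. This gives $L_{\ell_1}\cap L_{\ell_2}=\Q$ and hence $\mathscr{A}_{\ell_1\ell_2}=\mathscr{A}_{\ell_1}\times\mathscr{A}_{\ell_2}$.

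With this product decomposition in hand, the trace-equality condition defining $\mathscr{C}$ decomposes coordinate-wise under CRT: for $(A,B)\in\mathrm{GL}_2(\Z/\ell_1\ell_2\Z)^2$ corresponding to $((A_1,B_1),(A_2,B_2))$, $\mathrm{tr}(A)=\mathrm{tr}(B)$ in $\Z/\ell_1\ell_2\Z$ if and only if $\mathrm{tr}(A_i)=\mathrm{tr}(B_i)$ in $\F_{\ell_i}$ for both $i$. Hence $\mathscr{C}_{\ell_1\ell_2}=\mathscr{C}_{\ell_1}\times\mathscr{C}_{\ell_2}$, and
$$
\delta(\ell_1\ell_2)=\frac{|\mathscr{C}_{\ell_1\ell_2}|}{|\mathscr{A}_{\ell_1\ell_2}|}=\frac{|\mathscr{C}_{\ell_1}||\mathscr{C}_{\ell_2}|}{|\mathscr{A}_{\ell_1}||\mathscr{A}_{\ell_2}|}=\delta(\ell_1)\delta(\ell_2).
$$
The main obstacle is the linear disjointness step; the $\mathrm{PSL}_2$-order mismatch handles the non-abelian parts cleanly, but the abelian (cyclotomic) component requires the identification of determinants as cyclotomic characters via $\det\bar\rho_{f_j,\ell}(\mathrm{Frob}_p)=p^{k_j-1}$ together with the classical disjointness of cyclotomic fields at distinct primes.
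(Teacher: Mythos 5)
Your proof is correct but takes a more elaborate, self-contained route than the paper intends. The paper asserts the proposition follows easily from the explicit description \eqref{loeffler}; the natural reading is that Loeffler's Theorem~3.2.2 (cited just before \eqref{loeffler}) already determines $\mathscr{A}_h$ to be the full constrained group for every $h$ composed of primes $\geq M$, so for $h=\ell_1\ell_2$ the Chinese Remainder Theorem directly yields $\mathscr{A}_{\ell_1\ell_2}\cong\mathscr{A}_{\ell_1}\times\mathscr{A}_{\ell_2}$ and $\mathscr{C}_{\ell_1\ell_2}\cong\mathscr{C}_{\ell_1}\times\mathscr{C}_{\ell_2}$, whence the multiplicativity of $\delta$. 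You instead re-derive this independence from the \emph{prime}-modulus case of \eqref{loeffler} alone via Goursat's lemma, reducing to the linear disjointness $L_{\ell_1}\cap L_{\ell_2}=\Q$, which you settle by a composition-factor argument together with cyclotomic disjointness. That is a valid alternative with the virtue of not needing the composite-modulus version of Loeffler's theorem, but one implicit step deserves a sentence: after ruling out a common non-abelian composition factor (since $|\mathrm{PSL}_2(\F_{\ell_1})|\neq|\mathrm{PSL}_2(\F_{\ell_2})|$), any common quotient is solvable, and to then invoke $\Q(\zeta_{\ell_1})\cap\Q(\zeta_{\ell_2})=\Q$ you need that the maximal solvable quotient of $\mathscr{A}_{\ell_i}$ is already abelian; this holds because $\mathrm{SL}_2(\F_{\ell_i})\times\mathrm{SL}_2(\F_{\ell_i})\leq\mathscr{A}_{\ell_i}$ is perfect for $\ell_i\geq 5$ and has abelian cokernel $\Lambda_1$. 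Your enlargement of $M$ to guarantee $\ell_i\geq 5$ is precisely what this requires and is harmless relative to the paper's convention $M\geq 3$.
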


\section{Analytic results on primes}\label{S_sum}
%\subsection{Asymptotic formula of $\pi_{f_1,f_2}(x,h)$ and $\pi_{f_1,f_2}^*(x,h)$}
Recall that  $f_1$ and $f_2$  are non-CM newforms with integer Fourier coefficients which are not character twists of each other. For a positive integer $h\ge 1$
and a real number $X\ge 2$, consider the function
\begin{equation}\label{pi_definition}
	\pi_{f_1,f_2}(X,h):=\sum_{\substack{p\le X, (p, h N)=1 \\ h|(a_1 (p)-a_2(p))}}1.
\end{equation}
%As a consequence of the work of Carayol \cite{cara}, the representation $\rho_h$ is ramified at all the primes $\ell$ dividing $hN$, where $N={\rm lcm}(N_1,N_2)$.  To see this fact for $\ell |N$, one can use, e.g.,  \cite[Lemma 2.6.1]{epw} in conjunction with the fact that either $a_1(\ell)\neq 0$ or $a_2(\ell)\neq 0$. And for $\ell |h$,  $\rho_h$ (and also $\bar\rho_h$) is ramified at $\ell$ because its determinant is a non-trivial power of the cyclotomic character, which is ramified at $\ell$. However there may exist some primes dividing $N$ at which $\bar\rho_h$ is unramified.

The representation $\bar\rho_h$, defined in Section \ref{S_basic}, is unramified outside $hN$. Also, it is ramified at all the primes $\ell |h$ because its determinant constituents a non-trivial power of the mod $\ell$ cyclotomic character which is ramified at $\ell$. However, there may exist some primes dividing $N$ at which $\bar\rho_h$ is unramified. 
It follows that a prime $p$ is 
unramified in $L_h$ only if either $(p,hN)=1$ or $p|N$. Since the image of Frobenius elements under $\bar\rho_h$ generate $\mathscr{A}_h$, we can write
\begin{align*}
	\pi_{f_1,f_2}(X,h)
	%&=| \{p\le x: (p,h N)=1, {\rm tr}\left(\bar{\rho}_{f_1,h}({\rm Frob}_p)\right)={\rm tr}\left(\bar{\rho}_{f_2,h}({\rm Frob}_p)\right)\} |
	%\notag \\
	&= |\{p\le X: p {\rm ~unramified~in~} L_h, \bar{\rho}_h\left({\rm Frob}_p \right) \in \mathscr{C}_h \}|+O(1),
\end{align*}
where the error term is due to the possible primes divisors of $N$ which are unramified in $L_h$. Now applying the Chebotarev density theorem (see \propref{effective_cdt}) for the field $L_h$, the group $\mathscr{A}_h$ 
and the set $\mathscr{C}_h$ which is stable under conjugation, we obtain the following. 
\begin{proposition}\label{asymptotic_pi}
	Let $f_1\in S_{k_1}(N_1)$ and $f_2\in S_{k_2}(N_2)$ be non-CM newforms with rational integral
	coefficients $a_1(n)$ and $a_2(n)$, respectively. Assume that $f_1$ and $f_2$ are not character twists of each other. Let $N=lcm(N_1,N_2)$ and  $h\ge 1$ be an integer. 
	If GRH is assumed, then for any positive integer $h$
	\begin{equation}\label{evl}
		\pi_{f_1,f_2}(X,h)=\delta(h){\pi(X)}+O\left(h^6X^{{1}/{2}} \log(h NX)\right).
	\end{equation}
\end{proposition}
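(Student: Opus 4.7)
The plan is to apply the effective Chebotarev density theorem \propref{effective_cdt} directly to the Galois extension $L_h/\Q$, with Galois group identified with $\mathscr{A}_h$ via \eqref{image} and the conjugation-stable subset $\mathscr{C}_h$. The paragraph immediately preceding the statement has already carried out the reformulation
\begin{equation*}
\pi_{f_1,f_2}(X,h) = \bigl|\{p \le X : p \text{ unramified in } L_h,\ \bar\rho_h({\rm Frob}_p) \in \mathscr{C}_h\}\bigr| + O(1),
\end{equation*}
so \propref{effective_cdt} plugs in without further preparation and yields a main term $\tfrac{|\mathscr{C}_h|}{|\mathscr{A}_h|}\pi(X) = \delta(h)\pi(X)$ together with an error term of size $\delta(h)\, X^{1/2}(\log d_{L_h} + n_{L_h}\log X)$, where $n_{L_h} = [L_h:\Q] = |\mathscr{A}_h|$.

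The remaining work is to massage this error into the claimed shape $O(h^6 X^{1/2}\log(hNX))$. First I would bound the discriminant of $L_h$. Since $\bar\rho_h$ is unramified outside the divisors of $hN$, so is $L_h/\Q$, and Serre's standard bound (a consequence of the conductor-discriminant formula) gives $\log d_{L_h} \ll n_{L_h}\log(hN) + n_{L_h}\log n_{L_h}$. Using $n_{L_h} \le |\mathscr{A}_h| \ll h^7$ from \eqref{upper_bound_C_h}, the second summand is dominated by the first, yielding $\log d_{L_h} \ll n_{L_h}\log(hN)$, and hence $\log d_{L_h} + n_{L_h}\log X \ll n_{L_h}\log(hNX)$.

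Substituting back and observing the key cancellation
\begin{equation*}
\delta(h) \cdot n_{L_h} = \frac{|\mathscr{C}_h|}{|\mathscr{A}_h|}\cdot |\mathscr{A}_h| = |\mathscr{C}_h| \ll h^6,
\end{equation*}
the error becomes $O(|\mathscr{C}_h|\, X^{1/2}\log(hNX)) = O(h^6 X^{1/2}\log(hNX))$, as required. The hard part will be precisely this accounting: a crude bound $\delta(h)\, n_{L_h} \ll h^7$ would lose a factor of $h$ and give only $O(h^7 X^{1/2}\log(hNX))$, so the sharp bound $|\mathscr{C}_h| \ll h^6$ from \eqref{upper_bound_C_h} is essential for matching the exponent in the statement. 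The $O(1)$ contribution from the finitely many prime divisors of $N$ that might happen to be unramified in $L_h$ is absorbed into the final error term.
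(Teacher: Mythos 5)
Your proposal is correct and follows essentially the same route as the paper: apply the effective Chebotarev theorem (\propref{effective_cdt}) to $L_h$ with $C=\mathscr{C}_h$ and $G=\mathscr{A}_h$, control $\log d_{L_h}$ by the Hensel--Serre discriminant bound $\log d_{L_h}\le n_{L_h}\log(hN\,n_{L_h})$, absorb $\log n_{L_h}\ll\log h$ into $\log(hN)$ using $n_{L_h}=|\mathscr{A}_h|\ll h^7$, and then exploit the cancellation $\delta(h)\,n_{L_h}=|\mathscr{C}_h|\ll h^6$ from \eqref{upper_bound_C_h} — which, as you correctly identify, is the step that gets the exponent $6$ rather than $7$.
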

Here we need to use \eqref{upper_bound_C_h} and the following variation of a result of Hensel (see \cite[Proposition 5, p. 129]{ser}).
\begin{equation}
	\log d_{L_h}\le \mathscr{A}_h \log(hN \mathscr{A}_h).
\end{equation} 

For our purpose, we now use \propref{asymptotic_pi} to obtain the following result giving an upper bound for the set of primes $p$ with  $a_1(p)=a_2(p)$. This may be also of independent interest.
\begin{proposition}\label{same_coefficient}
	Let $f_1$ and $f_2$ be newforms as before. Then under GRH
	$$
	|\{p\le X : a_1(p)=a_2(p)\} |=O(X^{13/14}).
	$$
\end{proposition}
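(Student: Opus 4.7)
The plan is to leverage \propref{asymptotic_pi} with a judicious choice of $h$. The starting observation is trivial: if $a_1(p) = a_2(p)$, then $h \mid (a_1(p) - a_2(p))$ for every positive integer $h$, so for any $h \ge 1$,
$$
|\{p \le X : a_1(p) = a_2(p)\}| \le \pi_{f_1,f_2}(X,h) + O(\omega(hN)),
$$
where the correction term accounts for the primes dividing $hN$ that are excluded from \eqref{pi_definition} and is $O(\log(hN))$, hence negligible.

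I would then specialize to $h = \ell$, a single prime exceeding the constant $M$ from \thmref{main}'s setup. \propref{asymptotic_pi} under GRH gives
$$
\pi_{f_1,f_2}(X, \ell) = \delta(\ell)\,\pi(X) + O\bigl(\ell^{6} X^{1/2} \log(\ell N X)\bigr),
$$
and by \propref{asymptotic_delta} (with $n = 1$) we have $\delta(\ell) \ll 1/\ell$ as soon as $\ell > M$. Combining these,
$$
|\{p \le X : a_1(p) = a_2(p)\}| \ll \frac{X}{\ell \log X} + \ell^{6} X^{1/2} \log(\ell X).
$$

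The remaining task is to balance the two error terms. Setting the main-term bound $X/(\ell \log X)$ equal to the Chebotarev error $\ell^{6} X^{1/2} \log X$ yields $\ell^{7} \asymp X^{1/2}/\log^{2} X$, i.e.\ $\ell \asymp X^{1/14}/(\log X)^{2/7}$. By Bertrand's postulate (or a weak form of PNT) a prime $\ell$ of this size exists for all sufficiently large $X$, and for such an $\ell$ both terms become $\ll X^{13/14}(\log X)^{-5/7}$, which is $O(X^{13/14})$.

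The only genuine subtleties are bookkeeping: verifying that the savings $(\log X)^{-5/7}$ absorb the logarithmic factors hidden in $\log(\ell N X)$, and that $\ell > M$ is automatic once $X$ is large enough. Neither is an obstacle of substance. The heavy lifting — the uniform Chebotarev estimate with the polynomial dependence $h^{6}$, and the multiplicative structure of $\delta$ together with $\delta(\ell) \sim 1/\ell$ — has already been done in Sections \ref{S_basic}--\ref{S_technical}, so this proposition is essentially a one-prime optimization of \propref{asymptotic_pi}.
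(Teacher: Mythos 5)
Your proposal is correct and matches the paper's proof essentially verbatim: both reduce to $\pi_{f_1,f_2}(X,\ell)$ for a single prime $\ell$, invoke Proposition~\ref{asymptotic_pi} under GRH together with $\delta(\ell)\ll 1/\ell$, and optimize $\ell\asymp X^{1/14}$ (up to logarithmic factors) via Bertrand's postulate. The only cosmetic difference is your slightly sharper choice $\ell\asymp X^{1/14}(\log X)^{-2/7}$ versus the paper's $X^{1/14}/\log X$, which makes no difference to the stated $O(X^{13/14})$ bound.
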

\begin{proof}
	Clearly, for any prime  $\ell$
	$$
	|\{p\le X: a_1(p)=a_2(p)\}| \le \pi_{f_1,f_2}(X,\ell) +O(1).
	$$
	Hence using \propref{asymptotic_pi}, for a large prime $\ell$ 
	$$
	|\{p\le X: a_1(p)=a_2(p)\}| =O\left( \frac{{\pi(X)}}{\ell} \right) +O\left(\ell^6X^{{1}/{2}} \log(\ell NX)\right).
	$$
	Now by  Bertrand's postulate, we chose a prime $\ell$ between $\frac{X^{1/14}}{\log X}$ and $2 \frac{X^{1/14}}{\log X}$ and this proves the result.
\end{proof}
We remark that for newforms of weight 2 and by making use of
various abelian extensions,  in \cite[Theorem 10]{mmp}, a better estimate in \propref{same_coefficient} is obtained.

We now define
\begin{equation}\label{pi*_definition}
	\pi_{f_1,f_2}^*(X,h)=\sum_{\substack{p\le X \\ h|(a_1 (p)-a_2(p)) \\ a_1(p)\neq a_2(p)} }1.
\end{equation}
Using \propref{asymptotic_pi} and \propref{same_coefficient} we deduce the following.
\begin{proposition}\label{asymptotic_pi*}
	Let $f_1$ and $f_2$ be newforms as in \propref{asymptotic_pi}. If GRH is assumed, then for any positive integer $h$ 
	\begin{equation}\label{me}
		\pi_{f_1,f_2}^*(X,h)=\delta(h)\pi(X)+O\left(h^6X^{{1}/{2}}\log (hNX)\right)+O(X^{13/14}).
	\end{equation}
\end{proposition}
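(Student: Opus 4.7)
The plan is to reduce the statement to the two preceding results: Proposition 4.1 gives an asymptotic for $\pi_{f_1,f_2}(X,h)$, and Proposition 4.2 controls the primes with $a_1(p)=a_2(p)$. The definitions of $\pi_{f_1,f_2}^*(X,h)$ and $\pi_{f_1,f_2}(X,h)$ differ in only two ways: (i) the coprimality condition $(p,hN)=1$ is imposed in $\pi_{f_1,f_2}$ but not in $\pi_{f_1,f_2}^*$, and (ii) the condition $a_1(p)\neq a_2(p)$ is imposed in $\pi_{f_1,f_2}^*$ but not in $\pi_{f_1,f_2}$. Both discrepancies turn out to be small enough to be absorbed into the claimed error terms.

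Concretely, I would set $A(X)=\{p\le X:h\mid (a_1(p)-a_2(p))\}$ and $C(X)=\{p\le X:a_1(p)=a_2(p)\}$. Since any $p\in C(X)$ satisfies $a_1(p)-a_2(p)=0$, which is divisible by $h$, we have $C(X)\subset A(X)$, and therefore
$$\pi_{f_1,f_2}^*(X,h)=|A(X)|-|C(X)|.$$
Next, the difference between $|A(X)|$ and $\pi_{f_1,f_2}(X,h)$ consists of primes $p\le X$ with $p\mid hN$, of which there are at most $\omega(hN)=O(\log(hN))$. Hence $|A(X)|=\pi_{f_1,f_2}(X,h)+O(\log(hN))$. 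Inserting the asymptotic from Proposition 4.1 and the bound $|C(X)|=O(X^{13/14})$ from Proposition 4.2 gives
$$\pi_{f_1,f_2}^*(X,h)=\delta(h)\pi(X)+O\bigl(h^6X^{1/2}\log(hNX)\bigr)+O(\log(hN))+O(X^{13/14}),$$
and the $O(\log(hN))$ term is dominated by the Chebotarev error term $O(h^6X^{1/2}\log(hNX))$, leaving the stated estimate.

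There is no substantive obstacle: the argument is essentially bookkeeping that combines the two preceding propositions. The only small points requiring care are the inclusion $C(X)\subset A(X)$, which makes the subtraction identity literal rather than approximate, and the trivial estimate $\omega(hN)=O(\log(hN))$ that handles the coprimality defect.
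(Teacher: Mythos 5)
Your argument is correct and matches the paper's intent exactly: the paper states Proposition~\ref{asymptotic_pi*} as an immediate deduction from Propositions~\ref{asymptotic_pi} and~\ref{same_coefficient} without spelling out the bookkeeping, and what you supply — the inclusion $C(X)\subset A(X)$ making the subtraction exact, and the bound $\omega(hN)=O(\log(hN))$ absorbed by the Chebotarev error term — is precisely the omitted computation.
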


\begin{remark}\label{sum_Fourier_coefficients}
	Indeed, the estimates given in  \propref{asymptotic_pi} and \propref{asymptotic_pi*}  are also valid for the set of primes $p\le X$ with $h|(a_1 (p)+a_2(p))$ (with an extra condition $a_1 (p)+a_2(p)\neq 0$ for the latter one). This can be achieved by considering the set $\mathscr{C}_h'=\{(A,B)\in \mathscr{A}_h: {\rm tr}(A)=-{\rm tr}(B)\} $ instead of   $\mathscr{C}_h$ in Section \ref{S_technical} and following the same arguments.
	%Note that  \eqref{id11} and \eqref{id12} do not alter after replacing $t$ with $-t$, the assertions of  \lemref{C_l_computation_general} and \lemref{C_l2}  are also valid for $\mathscr{C}_h'$.
	%are true even if we take  $a_1(p)+a_2(p)$ instead of their difference. %This is clear because \eqref{id11} does not change  after replacing $t$ by $-t$.
\end{remark}

\begin{remark}\label{artin}
	In the above propositions, if one assumes Artin's holomorphy conjecture in addition to GRH, then an improved error term can be obtained. 
	More precisely, in \propref{asymptotic_pi} and \propref{asymptotic_pi*}, we have  $O\left(h^3 X^{{1}/{2}}\log (hNX)\right)$ instead of $O\left(h^6X^{{1}/{2}}\log (hNX)\right)$ which gives the following estimate for \propref{same_coefficient}.
	\begin{equation}
		|\{p\le X : a_1(p)=a_2(p)\}| =O(X^{7/8}).
	\end{equation}
\end{remark}

\section{Sieving tool: Richert's weighted one-dimensional sieve form}\label{S_sieving}
We will prove \thmref{main} by using a suitably weighted sieve due to Richert. The sieve problem we encounter here is a one-dimensional sieve problem in the parlance of “sieve methods”. We will use notation and  conventions from \cite{HR}.

Let $\mathcal{A}$ be a finite set of integers not necessarily positive or distinct. Let $\mathcal{P}$
be an infinite set of prime numbers. For each prime $\ell \in \mathcal{P}$, 
let $\mathcal{A}_{\ell} :=\{a\in \mathcal{A}: a \equiv 0 \pmod \ell \}.$ We write
\begin{equation}\label{approximation}
	| \mathcal{A}|=X+r_1 \hspace{15pt} {\rm and} \hspace{15pt} | \mathcal{A}_{\ell}| =\delta{(\ell})X+r_{\ell},
\end{equation}
where $X$ (resp. $\delta({\ell})X$) and $r_1$ (resp. $r_{\ell})$ are a close  approximation and the remainder to $\mathcal{A}$ (resp. $\mathcal{A}_{\ell}$), respectively.
For a square free integer $d$ composed of primes of $\mathcal{P}$, let
$$\mathcal{A}_{d} =\{a\in \mathcal{A}: a \equiv 0 \pmod d \},\hspace{15pt} \delta(d)=\prod_{\ell|d }\delta({\ell})\hspace{15pt} {\rm and }\hspace{15pt} 
r_d=| \mathcal{A}_d|-\delta(d)X.$$
Notice that the function $\delta$ depends on both $\mathcal{A}$ and $\mathcal{P}$.
For a real number $z>0$, let
$$P(z)=\prod_{\ell \in \mathcal{P}, \ell <z}\ell \hspace{15pt} {\rm and} \hspace{15pt} W(z)=\prod_{\ell \in \mathcal{P}, \ell <z}(1-\delta(\ell)).$$

\begin{hyp}\label{hyp}
	For the above setup, we now state a series of hypotheses.
	\begin{itemize}
		\item[{\underline{\bf $\Omega_1$:}}]
		There exists a constant $A_1>0$ such that 
		$$0 \le \delta{(\ell)}\le 1-\frac{1}{A_1},~~
		\textit{\rm ~~for all}~ \ell \in \mathcal{P}.$$
		\item[{\underline{\bf $\Omega_2(1,L)$:} }]
		If $2 \le w \le z$, then 
		$$-L \leq \sum_{w \le \ell \le z}\delta(\ell)\log{\ell} -\log{\frac{z}{w}} \le A_2,$$
		where $A_2 \ge 1$ and $L \ge 1$ are some constants independent of $z$ and $w$.\\
		\item[{ \underline{\bf $R(1,\alpha)$:} }]
		There exist $0 <\alpha <1$ and $A_3, A_4\geq 1$ such that for $X\ge 2$ 
		$$\sum_{d \le \frac{X^{\alpha}}{(\log X)^{A_3}}}\mu(d)^2 3^{\omega(d)}|r_d|
		\le A_4\frac{X}{(\log X)^2}.$$
	\end{itemize}
\end{hyp}

% \subsection{The sifting function and  Richert's weighted one-dimentional sieve form}
For $ \mathcal{A}$ and $\mathcal{P}$ as above and for real numbers $u, v$ and $\lambda$ with $u \le v$, define the weighted sum
\begin{equation}
	\mathcal{W}(\mathcal{A}, \mathcal{P}, v, u, \lambda)=\sum_{\substack{a \in \mathcal{A}\\ (a,P(X^{1/v}))=1}}\Big(1-\sum_{\substack{X^{1/v}\le q< X^{1/u}\\ q|a, q\in \mathcal{P}}}\lambda \Big(1-u\frac{\log q}{\log X}\Big)\Big).
\end{equation}
We now state the following form of Richert’s weighted one-dimensional sieve.

\begin{theorem}[{\cite[Theorem 9.1, Lemma 9.1]{HR}}]\label{HRT}
	With notation as above, assume that the hypotheses $\Omega_1, \Omega_2(1,L)$ and $R(1,\alpha)$ hold for suitable constants $L$ and $\alpha$.
	Suppose further that there exists $u,v, \lambda \in \mathbb{R}$ and $A_5 \ge 1$ such that
	$$
	\frac{1}{\alpha}<u<v, ~~~~ \frac{2}{\alpha}\le v \le \frac{4}{\alpha}, ~~~~
	0<\lambda< A_5.$$
	Then 
	\begin{equation*}
		\mathcal{W}(\mathcal{A}, \mathcal{P}, v, u, \lambda)\ge XW(X^{1/v})
		\Big(F(\alpha,v,u,\lambda)-\frac{cL}{(\log X)^{1/14}}\Big),
	\end{equation*}
	where $c$ is a constant depends at most on $u$ and $v$ (as well as on the $A_i$'s and $\alpha$) and 
	\begin{equation}\label{F}
		F(\alpha,v,u,\lambda)=\frac{2e^{\gamma}}{\alpha v}\Big(
		\log(\alpha v-1)-\lambda \alpha u \log{\frac{v}{u}}+\lambda(\alpha u-1)
		\log{\frac{\alpha v-1}{\alpha u-1}}\Big).
	\end{equation}
	Here $\gamma$ is the Euler's constant and $X$ is the approximation of $\mathscr A$ given in \eqref{approximation}.
\end{theorem}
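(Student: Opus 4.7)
The plan is to derive this from the Jurkat--Richert one-dimensional (linear) sieve combined with Richert's weighting trick. First I would rewrite the weighted sum as a difference of standard sifting functions,
\[
\mathcal{W}(\mathcal{A},\mathcal{P},v,u,\lambda) = S(\mathcal{A},\mathcal{P},X^{1/v}) - \lambda\!\!\sum_{\substack{X^{1/v}\le q<X^{1/u}\\ q\in\mathcal{P}}}\!\!\Bigl(1-\tfrac{u\log q}{\log X}\Bigr)S(\mathcal{A}_q,\mathcal{P},X^{1/v}),
\]
where $S(\mathcal{B},\mathcal{P},z)=|\{b\in\mathcal{B}:(b,P(z))=1\}|$ is the ordinary sifting function. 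This converts the weighted problem into linear sifting problems at the common threshold $X^{1/v}$: one unshifted sift of $\mathcal{A}$, plus one shifted sift of $\mathcal{A}_q$ for each prime $q\in[X^{1/v},X^{1/u})$.

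Next I would invoke the Jurkat--Richert linear sieve. Under the hypotheses $\Omega_1$, $\Omega_2(1,L)$, $R(1,\alpha)$ the sieve supplies a lower bound
\[
S(\mathcal{A},\mathcal{P},X^{1/v})\ge X\,W(X^{1/v})\Bigl\{f(\alpha v)-O\bigl(L(\log X)^{-1/14}\bigr)\Bigr\}
\]
with the lower sieve function $f$, and matching upper bounds for each $S(\mathcal{A}_q,\mathcal{P},X^{1/v})$ in terms of the upper function $F$ at level of distribution $X^{\alpha}/q$. The constraints $2/\alpha\le v\le 4/\alpha$ and $u>1/\alpha$ are tailored precisely so that the arguments $\alpha v$ and $\alpha v - v\log q/\log X$ fall inside $[2,3]$, where $f(s)=2e^\gamma\log(s-1)/s$ and $F(s)=2e^\gamma/s$ admit their closed-form expressions.

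Substituting these bounds into the decomposition, pulling out the common factor $X\,W(X^{1/v})$, and invoking the Mertens-type asymptotic packaged in $\Omega_2(1,L)$, I would convert the remaining sum over $q$ into a Riemann integral,
\[
\sum_{X^{1/v}\le q<X^{1/u}}\!\!\!\delta(q)(1-u t_q)\frac{2e^\gamma}{\alpha v-v t_q}\;\approx\; \int_{1/v}^{1/u}\frac{(1-ut)\cdot 2e^\gamma}{(\alpha v-vt)\,t}\,dt,
\]
with $t_q=\log q/\log X$. A direct evaluation of this integral produces exactly the three logarithmic pieces $\log(\alpha v-1)$, $-\lambda\alpha u\log(v/u)$, and $\lambda(\alpha u-1)\log\tfrac{\alpha v-1}{\alpha u-1}$ appearing in the formula for $F(\alpha,v,u,\lambda)$, and pulling the prefactor $2e^\gamma/(\alpha v)$ to the front completes the identification of the main term.

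The main obstacle is the error accounting: the single $O(L(\log X)^{-1/14})$ loss must absorb the pooled remainder terms from each of the many applications of the linear sieve. Following Halberstam--Richert, one takes the level of distribution slightly below $X^\alpha$, namely $X^{\alpha}/(\log X)^{A_3}$, and handles the aggregated remainders via the $3^{\omega(d)}$-weighted bound provided by $R(1,\alpha)$, which dominates the divisor weights generated by the Buchstab iteration underlying the linear sieve. Keeping constants uniform in $q$ throughout, and verifying that the subtracted upper-bound contribution does not overwhelm the lower-bound main term under the prescribed range of $u,v,\lambda$, finishes the proof.
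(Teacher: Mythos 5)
The paper does not prove this statement; it is imported verbatim from Halberstam--Richert \cite[Theorem 9.1, Lemma 9.1]{HR}, so there is no internal proof to compare against. Your sketch is a correct high-level reconstruction of the Halberstam--Richert argument. The decomposition of $\mathcal{W}$ into a single unshifted lower-bound sift of $\mathcal{A}$ plus a $q$-indexed family of upper-bound sifts of $\mathcal{A}_q$, all at the common sieving level $X^{1/v}$, is exactly Richert's weighting device, and the Jurkat--Richert linear sieve in closed form does supply the functions $f(s)=2e^{\gamma}\log(s-1)/s$ and $F(s)=2e^{\gamma}/s$ together with the $L(\log X)^{-1/14}$ error rate. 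Your partial-fraction/integral evaluation also checks out: with $t=\log q/\log X$ one has
\begin{equation*}
\alpha\int_{1/v}^{1/u}\frac{1-ut}{(\alpha-t)t}\,dt
=\alpha u\log\frac{v}{u}-(\alpha u-1)\log\frac{\alpha v-1}{\alpha u-1},
\end{equation*}
which, after pulling out $2e^{\gamma}/(\alpha v)$, gives precisely the three logarithmic terms of $F(\alpha,v,u,\lambda)$. Two small imprecisions worth noting. First, the constraint $2/\alpha\le v\le 4/\alpha$ puts $\alpha v\in[2,4]$, not $[2,3]$; the closed form of $f$ is valid on all of $[2,4]$, whereas the shifted arguments $\alpha v-vt_q$ satisfy $\alpha v-vt_q\le\alpha v-1\le 3$, which is what keeps the upper sieve applications in the regime where $F(s)=2e^{\gamma}/s$. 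Second, the remainder bookkeeping does need one more comment: restricting to $\mathcal{A}_q$ multiplies the remainder density by one extra prime, and the $3^{\omega(d)}$ weight built into $R(1,\alpha)$ is exactly calibrated (Buchstab iteration contributes a $2^{\omega(d)}$, and the extra $q$ a further factor) to absorb this when the remainders are pooled, as you indicate.
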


\section{Proof of \thmref{main}}\label{S_proof_main}
We shall closely follow the arguments of \cite{jos}. The idea is to apply \thmref{HRT} to the following situation.
\begin{align*}
	\mathcal{A}&:=\{|a_1(p)-a_2(p)|: p \le X, a_1(p)\neq a_2(p)\} {\rm ~~~~and~~~~}
	\mathcal{P}:=\{\ell: \ell  \ge M\},
\end{align*}
where $M=M(f_1,f_2)$ is the constant appeared in Section \ref{S_technical}.
%Since %we have allowed repetition in the set $\mathcal{A}$, 
%from the prime number theorem we know  that
%$$ |\mathcal{A}|=\frac{X}{\log X}+r_1.$$
It is clear that for any $\ell \in \mathcal{P}$
$$
|\mathcal{A}_\ell|=|\{p\le X: a_1(p)\neq a_2(p), \ell|(a_1(p)-a_2(p))\}|= 
\pi_{f_1,f_2}^*(X,\ell).
$$ 
%where $\pi_{f_1,f_2}^*(X,\ell)$ is defined in \eqref{pi_definition}. 
Applying \propref{asymptotic_pi*}, under GRH,  we obtain
$$| \mathcal{A}_\ell |=\delta({\ell })\frac{X}{\log X}+r_{\ell},$$ 
where $r_\ell=O(\ell^6X^{{1}/{2}}\log (\ell NX))+O(X^{13/14})$.
Furthermore,  if $d$ is a square free integer composed of primes from $\mathcal{P}$, then  from \propref{delta_multiplicative_large_prime} and \propref{asymptotic_pi*} we have
\begin{equation}\label{rd1}
	\delta(d)=\prod_{\ell|d, \ell \in \mathcal{P}}\delta{(\ell)} ~{\rm and}~ r_d=O(d^6X^{{1}/{2}}\log (d NX))+O(X^{13/14}).
\end{equation}
To apply \thmref{HRT}, we now verify that hypotheses $\Omega_1, \Omega_2(L,1)$ and $R(1,\alpha)$, given in Hypothesis \ref{hyp}, hold for our choice of $\mathcal{A}$ and $\mathcal{P}.$ %We will do this now.

\begin{lem}\label{o1}
	Let $f_1$ and $f_2$ be newforms as before. Then we have the following.
	\begin{enumerate}
		\item
		Hypothesis $\Omega_1$ holds with a suitable $A_1$.
		\item
		{Hypothesis $\Omega_2(1,L)$ holds with a suitable $L$.}
		\item
		Under GRH, the  hypothesis $R(1,\alpha)$ holds with any $\alpha < \frac{1}{14}$. 
	\end{enumerate}
\end{lem}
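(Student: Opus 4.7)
The plan is to verify each of $\Omega_1$, $\Omega_2(1,L)$, and $R(1,\alpha)$ for the choice of $\mc{A}$ and $\mc{P}$ above, leaning on the asymptotic $\delta(\ell)\sim 1/\ell$ from \propref{asymptotic_delta}, the multiplicativity of $\delta$ on squarefree integers built from primes in $\mc{P}$ coming from \propref{delta_multiplicative_large_prime}, and the remainder estimate \eqref{rd1} produced by \propref{asymptotic_pi*}.

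For (1), \propref{asymptotic_delta} gives $\delta(\ell)=O(1/\ell)$ uniformly for $\ell\in\mc{P}$, so $\delta(\ell)$ is bounded away from $1$ for all large $\ell$; for the finitely many remaining $\ell\in\mc{P}$ one has $\delta(\ell)<1$ because the trace-equality constraint makes $\mathscr{C}_\ell$ a proper subset of $\mathscr{A}_\ell$. Taking $A_1$ so that $1-1/A_1$ exceeds the maximum of these finitely many values settles (1).

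For (2), the combinatorial identities in \lemref{A_l_computation} and \lemref{C_l_computation_general} sharpen $\delta(\ell)\sim 1/\ell$ to $\delta(\ell)=\ell^{-1}+O(\ell^{-2})$, so that
\[
\sum_{w \le \ell \le z}\delta(\ell)\log\ell
= \sum_{w \le \ell \le z}\frac{\log\ell}{\ell}+O(1).
\]
Mertens's theorem identifies the right-hand sum with $\log(z/w)+O(1)$, and this gives the two-sided bound required by $\Omega_2(1,L)$ for a suitable constant $L$.

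The main obstacle is (3). Setting $Y:=X^\alpha/(\log X)^{A_3}$ and substituting the bound $|r_d|\ll d^6 X^{1/2}\log(dNX)+X^{13/14}$ from \eqref{rd1}, the target sum
\[
\sum_{d \le Y}\mu(d)^2 3^{\omega(d)}|r_d|
\]
splits into two pieces. Using $d^6\le Y^6$ together with the standard bound $\sum_{d\le Y}\mu(d)^2 3^{\omega(d)}\ll Y(\log Y)^2$, the first piece is $\ll Y^7 X^{1/2}(\log X)^{O(1)}$ and the second is $\ll Y X^{13/14}(\log Y)^2$. Both fit inside $X/(\log X)^2$ precisely when $Y^7\ll X^{1/2}/(\log X)^{O(1)}$ and $Y\ll X^{1/14}/(\log X)^{O(1)}$, which hold simultaneously for any $\alpha<1/14$ once $A_3$ is chosen large enough to absorb the logarithmic losses. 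It is here that the threshold $1/14$ is forced, and the uniform $d^6$ dependence in the Chebotarev error of \propref{asymptotic_pi*} is the decisive feature that makes the level of distribution this large.
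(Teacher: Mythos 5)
Your proof is correct and follows essentially the same route as the paper: parts (1) and (2) rest on $\delta(\ell)\sim 1/\ell$ (sharpened to $\delta(\ell)=\ell^{-1}+O(\ell^{-2})$ via Lemmas~\ref{A_l_computation} and~\ref{C_l_computation_general}) and Mertens's theorem, and part (3) plugs the remainder bound \eqref{rd1} into the $R(1,\alpha)$ sum. The only cosmetic difference is in controlling the $3^{\omega(d)}$ factor — you invoke the average bound $\sum_{d\le Y}\mu(d)^2 3^{\omega(d)}\ll Y(\log Y)^2$, whereas the paper uses the pointwise bound $3^{\omega(d)}\ll d^{\varepsilon}$ — but both lead to the same constraint $\alpha<\tfrac{1}{14}$.
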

\begin{proof}
	By \propref{asymptotic_delta} the validity of hypotheses $\Omega_1$ and $\Omega_2(1,L)$ are immediate because if $\ell \in P$ then $\delta(\ell)\sim \frac{1}{\ell}$  and this proves hypothesis $\Omega_1$ while the latter one can be achieved by using  Mertens's theorem (cf. \cite[Lemmas 4.6.1, 4.6.2, 4.6.3]{jos}). So we only give a proof of part (3). From  \cite[p. 260]{HW}, we know that 
	$3^{\omega(n)}\le d(n)^{3\log 3/\log 2}\ll n^{\varepsilon}$. Therefore, for any positive constant $A_3$, from \eqref{rd1}, we have
	\begin{align*}
		\sum_{d \le \frac{X^{\alpha}}{(\log X)^{A_3}}}\mu(d)^2 3^{\omega(d)}|r_d|
		%& \ll \sum_{d \le \frac{X^{\alpha}}{(\log X)^{A_3}}}d^{\varepsilon}|r_d|\\
		&\ll	\sum_{d \le \frac{X^{\alpha}}{(\log X)^{A_3}}} \left(d^{6+\varepsilon}X^{1/2}\log(dNX) + X^{13/14}\right).
	\end{align*}
	We now see that for any  $\alpha< 1/14$
	$$
	\sum_{d \le \frac{X^{\alpha}}{(\log X)^{A_3}}}\mu(d)^2 3^{\omega(d)}|r_d|\ll 
	\frac{X}{(\log X)^2}$$
	and this completes the proof.
\end{proof}
Next we need to choose sieve parameters $\alpha, u,v, \lambda$ satisfying conditions in \thmref{HRT}. For $k\ge 2$ we take:
$$\alpha=\frac{k-1}{14k};~~u=\frac{14k+1}{k-1};~~ v=\frac{56 k}{k-1};~~\lambda=\frac{1}{k^{1/5}}.$$
One can easily verify that these parameters satisfy the conditions required for applying \thmref{HRT} and hence for our choices of $\mathcal{A}$ and $\mathcal{P}$, we obtain
\begin{equation*}
	\mathcal{W}(\mathcal{A}, \mathcal{P}, v, u, \lambda)\gg \frac{X}{(\log X)^2}
	\Big(F(\alpha,v,u,\lambda)-\frac{cL}{(\log X)^{1/14}}\Big).
\end{equation*}
Note that here we have used the fact that $|\mathcal{A}|\gg \frac{X}{\log X}$   and $ W(X)\gg \frac{1}{\log X}$ 
for $X\gg 0$
which follows immediately by using \propref{asymptotic_delta}. Also for the above choices of sieve parameters $\alpha, u,v, \lambda$, the function $F(\alpha,v,u,\lambda)$, defined by \eqref{F},
can be computed explicitly and is given by
$$F\left(\frac{k-1}{14k}, \frac{56k}{k-1},\frac{14k+1}{k-1}, \frac{1}{k^{1/5}}\right)=
\frac{e^{\gamma}\left(14k^{6/5}\log 3+\log 42 k-(1+14k)\log \left(\frac{56k}{14k+1}\right)\right)}{28k^{6/5}}.$$
Moreover,  $F(\alpha,v,u,\lambda)>0$ for $k>1.71\cdots$. Therefore for a fixed weight $k\ge 2$ one can choose $X$, sufficiently large, such that 
$F(\alpha,v,u,\lambda)-\frac{cL}{(\log X)^{1/14}}>0.$  In other words, we have 
\begin{equation}\label{w1}
	\mathcal{W}(\mathcal{A}, \mathcal{P}, v, u, \lambda)\gg \frac{X}{(\log X)^2}.
\end{equation}
Since there are at least $\frac{X}{(\log X)^2}$ many primes $p \le X$ which make a positive contribution to the left hand side of \eqref{w1} therefore to complete the proof of \thmref{main} it is sufficient to show that for any such prime $p$
$$\omega(a_1(p)-a_2(p))\le  [ 7k+{1}/{2}+k^{1/5}  ] .$$
Let $p$ be such a prime. Then $({a_1(p)-a_2(p)},X^{1/v})=1$ and 
\begin{equation}\label{w2}
	1-\sum_{\substack{X^{1/v}\le q< X^{1/u}\\ q|{(a_1(p)-a_2(p))}}}\lambda \Big(1-u\frac{\log q}{\log X}\Big)>0.
\end{equation}
Therefore, we write
\begin{align}\label{idd}
	\omega(a_1(p)-a_2(p))=\sum_{q |{(a_1(p)-a_2(p))}}1=\sum_{\substack{X^{1/v}< q < X^{1/u}\\ q|{(a_1(p)-a_2(p))}}}1+\sum_{\substack{q \ge X^{1/u}\\ q|{(a_1(p)-a_2(p))}}}1.
\end{align}
Now to estimate the first sum on the right of \eqref{idd} we use \eqref{w2} and obtain
\begin{equation*}\label{w3}
	\sum_{\substack{X^{1/v}< q < X^{1/u}\\ q|{(a_1(p)-a_2(p))}}}1<\frac{1}{\lambda}+u\sum_{\substack{X^{1/v}< q < X^{1/u}\\ q|{(a_1(p)-a_2(p))}}}\frac{\log q}{\log X}.
\end{equation*}
For the second sum we observe that if $q \ge X^{1/u}$ then $\frac{\log q}{\log X}\ge \frac{1}{u}$ that gives
\begin{equation*}\label{w4}
	\sum_{\substack{q \ge X^{1/u}\\ q|{(a_1(p)-a_2(p))}}}1\le u\sum_{\substack{q \ge X^{1/u}\\ q|{(a_1(p)-a_2(p))}}}\frac{\log q}{\log X}.
\end{equation*}
Substituting the last two inequalities in \eqref{idd} yields
\begin{align*}
	\omega({a_1(p)-a_2(p)})
	& \le \frac{1}{\lambda}+u\sum_{q|{(a_1(p)-a_2(p))}}\frac{\log q}{\log X} \le\frac{1}{\lambda}+u\frac{\log {(|a_1(p)-a_2(p)|)}}{\log X}.
\end{align*}
Using Deligne's estimate  we know 
$|{a_1(p)-a_2(p)}|\le 4p^{(k-1)/2}$. Therefore for any $p\le X$ as above, we have
\begin{equation*}
	\omega({a_1(p)-a_2(p)})\le \frac{1}{\lambda}+u\frac{k-1}{2}+u\frac{\log 4}{\log X}.
\end{equation*}
Finally substituting the values of $u$ and $\lambda$ and choosing $X$ large enough completes the proof.

\section{Proof of \thmref{Omega}}\label{S_proof_Omega}
The idea of the proof is similar to the proof of \thmref{main} with minor modifications.  We shall apply \thmref{HRT} with the same setting as in Section \ref{S_proof_main}. For $k\ge 2$, we choose the sieve parameters as follows.
$$\alpha=\frac{k-1}{14k};~~u=\frac{26k+1}{k-1};~~ v=\frac{30 k}{k-1};~~\lambda=\frac{1}{\sqrt{\log k}}.$$
Again, these parameters satisfy the conditions required for \thmref{HRT} and
the corresponding function $F(\alpha,v,u,\lambda)>0$ for $k>1.006$.
Hence as in the proof of \thmref{main}, the corresponding weighted sum satisfies
\begin{equation}\label{cw1}
	\mathcal{W}(\mathcal{A}, \mathcal{P}, v, u, \lambda)\gg \frac{X}{(\log X)^2}.
\end{equation}
Next we observe that 
\begin{align*}
	|\{p \le X: \ell^2|(a_1(p)-a_2(p)), ~X^{1/v}\le \ell\le X^{1/u} \}|
	&=\sum_{X^{1/v}\le \ell\le X^{1/u}} \left(\pi_{f_1,f_2}(X,\ell^2) +O(1) \right),
\end{align*}
where the error term is due to the presence of those primes $p$ such that $p|\ell N$ and  $ \ell^2|(a_1(p)-a_2(p))$. Applying \propref{asymptotic_pi} gives that the left side of the above equality is equal to 
$$
{\pi(X)} \sum_{X^{1/v}\le \ell\le X^{1/u}} \frac{1}{\ell^2}+O\left(X^{{1}/{2}+\epsilon}\sum_{X^{1/v}\le \ell\le X^{1/u}} \ell^{12}  \right).
$$
Since $u>26$, we have
\begin{align}\label{ss}
	|\{p \le X: \ell^2|(a_1(p)-a_2(p)), ~X^{1/v}\le \ell\le X^{1/u} \}| 
	=o\Bigg(\frac{X}{(\log X)^2}\Bigg).
\end{align}
We conclude by combining \eqref{cw1} and \eqref{ss} that there are at least $\frac{X}{(\log X)^2}$ many primes $p\le X$ such that \begin{itemize}
	\item [(a)]
	$a_1(p)-a_2(p)$ does not have any prime divisors less than $X^{1/v}$,
	\item[(b)]
	for primes $\ell |(a_1(p)-a_2(p))$ with $X^{1/v}<\ell< X^{1/u}$, 
	${\ell}^2\nmid (a_1(p)-a_2(p)),$
	\item[(c)]
	the contribution of $p$ to the sifting function $\mathcal{W}(\mathcal{A}, \mathcal{P}, v, u, \lambda)$ is positive, i.e., 
	\begin{equation*}\label{cw2}
		1-\sum_{\substack{X^{1/v}\le q< X^{1/u}\\ q \Vert{(a_1(p)-a_2(p))}}}\lambda \Big(1-u\frac{\log q}{\log X}\Big)>0.
	\end{equation*}
\end{itemize}
In order to complete the proof, we will show that if $p\le X$ is a prime satisfying the three conditions above then 
$$\Omega(a_1(p)-a_2(p))\le  [ 13k+{1}/{2}+\sqrt{\log k}  ] .$$
Let $p\le X$ be a prime satisfying  (a), (b) and (c). Then as in the proof of \thmref{main}, we have
\begin{align*}
	\Omega(a_1(p)-a_2(p))
	%&=\sum_{q^m |{(a_1(p)-a_2(p))}}1
	&= \sum_{\substack{X^{1/v}< q< X^{1/u}\\ q\Vert {(a_1(p)-a_2(p))}}}1+\sum_{\substack{q \ge X^{1/u}\\ q^m|{(a_1(p)-a_2(p))}}}1
	<\frac{1}{\lambda}+u\sum_{q^m|(a_1(p)-a_2(p))}\frac{\log q}{\log X}
\end{align*}
which gives
$$
\Omega(a_1(p)-a_2(p)) \le \frac{1}{\lambda}+u\frac{\log (|a_1(p)-a_2(p)|)}{\log X}.
$$
Now applying Deligne's estimate and arguing as in the proof of \thmref{main}, we get  the desired result.

%\section{Proof of \thmref{normal_order}}\label{S_proof_normal}
%This follows easily by a  direct application of  \cite[Theorem 3]{liu} in view of \propref{asymptotic_pi*}. The proof is exactly similar to the proof of \cite[Theorem 7.1.1]{jos} and so we will not repeat here. We only mention that we need to take $\beta< \frac{1}{14}$ for verifying the conditions (C) and (1)-(6) in the notation of \cite[Theorem 3]{liu}.

\begin{acknowledgements} 
	The authors  thank	Prof. Shaunak Deo, Prof. Satadal Ganguly and  Dr. Siddhesh Wagh for many useful discussions and their suggestions on an earlier version of the paper. They would like to express their sincere gratitude to Prof. M. Ram Murty for reading the manuscript, providing  valuable comments and also for sending his paper \cite{mmp}. The authors thank their  respective institutes  for providing excellent working condition and also for financial support.
	%The suggestions made by the anonymous referee were invaluable and	have improved the readability of the paper, we acknowledge their help.
\end{acknowledgements}

\end{document}